\newtheorem{theorem}{Theorem}[section]
\newtheorem{lemma}[theorem]{Lemma}
\newtheorem{proposition}[theorem]{Proposition}
\newtheorem{corollary}[theorem]{Corollary}
\theoremstyle{definition}
\newtheorem{example}[theorem]{Example}
\theoremstyle{remark}
\newtheorem{remark}[theorem]{Remark}
\numberwithin{equation}{section}
\newcommand{\F}{\mathbb{F}}
\begin{document}
\title[A generalization of the four circulant construction]{New binary self-dual codes via a generalization of the four circulant
construction}
\author{Joe Gildea}
\address{Department of Mathematics, University of Chester,UK }
\email{j.gildea@chester.ac.uk}
\author{Ab\.id\.in Kaya}
\address{Department of Mathematics Education, Sampoerna University, 12780,
Jakarta, Indonesia} \email{abidin.kaya@sampoernauniversity.ac.id}
\author{Bahattin Yildiz\textsuperscript{*}}\thanks{*The Corresponding Author}
\address{Department of Mathematics \& Statistics, Northern Arizona
University, Flagstaff 86011, AZ} \email{bahattin.yildiz@nau.edu}
\subjclass[2010]{Primary 94B05, 15B33} \keywords{circulant matrices;
extremal self-dual codes; Gray maps; extension theorems; neighboring
construction}

\begin{abstract}
In this work, we generalize the four circulant construction for
self-dual
codes. By applying the constructions over the alphabets $\mathbb{F}_2$, $%
\mathbb{F}_2+u\mathbb{F}_2$, $\mathbb{F}_4+u\mathbb{F}_4$, we were
able to obtain extremal binary self-dual codes of lengths 40, 64
including new extremal binary self-dual codes of length 68. More
precisely, 43 new extremal binary self-dual codes of length 68, with
rare new parameters have been constructed.
\end{abstract}

\maketitle

\section{Introduction}

Binary self-dual codes have generated a considerable amount of
interest in the literature for decades for their connections to many
other mathematical structures and applications. They have an upper
bound on their minimum distance, which is given by Conway and Sloane
in \cite{conway}, and is finalized by
Rains in \cite{Rains} as $d\leq 4\lfloor \frac{n}{24}\rfloor +6$ when $%
n\equiv 22\pmod{24}$ and $d\leq 4\lfloor \frac{n}{24}\rfloor +4$,
otherwise, where $n$ is the length of the self-dual code. Self-dual
codes meeting these bounds are called \textit{extremal}.

There is an extensive literature on constructions for extremal
binary self-dual codes. One of the main directions of research in
the literature has been to construct extremal binary self-dual codes
whose weight enumerators have new parameters, that were not known to
exist before. This comes from the works by Conway and Sloane in
\cite{conway} and Dougherty et al. in \cite{dougherty1} in which the
possible weight enumerators of all extremal self-dual codes of
lengths up to 100 were classified.

While the tools in constructing extremal binary self-dual codes may
differ from taking a special matrix construction, considering a
certain automorphism or the neighboring construction, in all of
these cases the final step is to do a computer search over a reduced
set of possible inputs. Using the afore-mentioned tools reduce the
search field considerably so that the search is now feasible to do
in a reasonable time.

For most known constructions for self-dual codes, one of the key
concepts is ``criculant" matrices. It is well-known that circulant
matrices are determined uniquely by their first rows and that they
commute in matrix multiplication. The double-circulant, bordered
double circulant and four-circulant constructions are some of the
well-known construction methods in the literature that make use of
circulant martrices. Through these constructions the search field
for a self-dual code of length $2n$ usually reduces to a constant
multiple of $2^n$, which makes it feasible to search for self-dual
codes of lengths up to 88 for example.

In this work, we will be considering a generalized version of the
four-circulant construction over the alphabets $\F_2$, $\F_2+u\F_2$
and $\F_4+u\F_4$ to construct extremal binary self-dual codes. Our
construction, in general, is different than the four-circulant
construction and we will be giving the comparative results. Using
this construction, we are able to construct many extremal binary
self-dual codes of lengths 40 and 64, and in particular we are able
to construct 43 new extremal binary self-dual codes of length $68$
with new weight enumerators in $W_{68,2}$. The exact parameters in
the weight enumerators are given in section 5.

The rest of the paper is organized as follows. In section 2, we give
the preliminaries on the alphabets to be used, special types of
matrices that we use in our constructions and the well known four
circulant construction. In section 3, we introduce our
generalization of the four circulant construction and we give
theoretical results about when they lead to self-dual codes as well
as their connection to the ordinary four-circulant construction. In
section 4 we give the numerical results about extremal binary
self-dual codes of lengths 40 and 64 that we obtain by a direct
application of our constructions over different alphabets together
with a comparison with the usual four-circulant construction. In
section 5, we apply the neighboring construction as well as
extensions to the codes obtained in section 4 to find new extremal
binary self-dual codes of length 68. We finish with concluding
remarks and directions for possible future research.

\section{Preliminaries}

Let $\mathcal{R}$ be a commutative Frobenius ring of characteristic
2. A
code $\mathcal{C}$ of length $n$ over $\mathcal{R}$ is an $\mathcal{R}$-submodule of $%
\mathcal{R}^{n}$. Elements of the code $\mathcal{C}$ are called codewords of $\mathcal{C}$. Let $%
x=\left( x_{1},x_{2},\ldots ,x_{n}\right) $ and $y=\left(
y_{1},y_{2},\ldots ,y_{n}\right) $ be two elements of
$\mathcal{R}^{n}$. The duality is understood in terms of the
Euclidean inner product; $\left\langle x,y\right\rangle _{E}=\sum
x_{i}y_{i}$. The dual $\mathcal{C}^{\bot }$ of the code
$\mathcal{C}$ is defined as
\begin{equation*}
\mathcal{C}^{\bot }=\left\{ x\in \mathcal{R}^{n} \mid \left\langle
x,y\right\rangle _{E}=0\text{ for all }y\in \mathcal{C}\right\} .
\end{equation*}%
We say that $\mathcal{C}$ is self-dual if
$\mathcal{C}=\mathcal{C}^{\bot }$.

Two self-dual binary codes of
dimension $k$ are said to be neighbors if their intersection has dimension $%
k-1$.

Let $\mathbb{F}_{4}=\mathbb{F}_{2}\left( \omega \right) $ be the
quadratic field extension of the binary field $\mathbb{F}_{2}
=\{0,1\}$, where $\omega ^{2}+\omega +1=0$. The ring
$\mathbb{F}_{4}+u\mathbb{F}_{4}$ defined via $u^{2}=0$ is a
commutative
binary ring of size $16$. We may easily observe that it is isomorphic to $%
\mathbb{F}_{2}\left[ \omega ,u\right] /\left\langle u^{2},\omega
^{2}+\omega +1\right\rangle $. The ring has a unique non-trivial
ideal $\left\langle
u\right\rangle =\left\{ 0,u,u\omega ,u+u\omega \right\} $. Note that $%
\mathbb{F}_{4}+u\mathbb{F}_{4}$ can be viewed as an extension of $\mathbb{F}%
_{2}+u\mathbb{F}_{2}$ and so we can describe any element of $\mathbb{F}_{4}+u%
\mathbb{F}_{4}$ in the form $\omega a+\bar{\omega}b$ uniquely, where
$a,b\in \mathbb{F}_{2}+u\mathbb{F}_{2}$.

\begin{equation*}
\begin{tabular}{lll}
$\left( \mathbb{F}_{4}+u\mathbb{F}_{4}\right) ^{n}$ &
$\underrightarrow{\ \
\ \psi _{\mathbb{F}_{4}+u\mathbb{F}_{4}\ \ \ }}$ & $\left( \mathbb{F}_{2}+u%
\mathbb{F}_{2}\right) ^{2n}$ \\
$\downarrow \varphi _{\mathbb{F}_{4}+u\mathbb{F}_{4}}$ &  & $\downarrow $ $%
\varphi _{\mathbb{F}_{2}+u\mathbb{F}_{2}}$ \\
$\mathbb{F}_{4}^{2n}$ & $\underrightarrow{\ \ \ \ \ \ \ \psi _{\mathbb{F}%
_{4}\ \ \ \ \ }}$ & $\mathbb{F}_{2}^{4n}$%
\end{tabular}%
\end{equation*}

Let us recall the following Gray Maps from \cite{gaborit,ling} and \cite%
{dougherty2};%
\begin{eqnarray*}
\psi _{\mathbb{F}_{4}} &:&a\omega +b\overline{\omega }\mapsto \left(
a,b\right) \text{, \ }a,b\in \mathbb{F}_{2}^{n} \\
\varphi _{\mathbb{F}_{2}+u\mathbb{F}_{2}} &:&a+bu\mapsto \left(
b,a+b\right)
\text{, \ }a,b\in \mathbb{F}_{2}^{n} \\
\psi _{\mathbb{F}_{4}+u\mathbb{F}_{4}} &:&a\omega +b\overline{\omega }%
\mapsto \left( a,b\right) \text{, \ }a,b\in \left( \mathbb{F}_{2}+u\mathbb{F}%
_{2}\right) ^{n} \\
\varphi _{\mathbb{F}_{4}+u\mathbb{F}_{4}} &:&a+bu\mapsto \left(
b,a+b\right) \text{, \ }a,b\in \mathbb{F}_{4}^{n}
\end{eqnarray*}%
Note that these Gray maps preserve orthogonality in the respective
alphabets, for the
details we refer to \cite{ling}. The binary codes $\varphi _{\mathbb{F}_{2}+u%
\mathbb{F}_{2}}\circ \psi _{\mathbb{F}_{4}+u\mathbb{F}_{4}}\left(
\mathcal{C} \right) $
and $\psi _{\mathbb{F}_{4}}\circ \varphi _{\mathbb{F}_{4}+u\mathbb{F}%
_{4}}\left( \mathcal{C}\right) $ are equivalent to each other. The
Lee weight of an element in $\mathbb{F}_{4}+u\mathbb{F}_{4}$ is
defined to be the Hamming weight of its binary image under any of
the previously mentioned compositions of the maps. A self-dual code
is said to be of Type II if the Lee weights of all codewords are
multiples of $4$, otherwise it is said to be of Type I.

\begin{proposition}
$($\cite{ling}$)$ Let $\mathcal{C}$ be a code over
$\mathbb{F}_{4}+u\mathbb{F}_{4}$.
If $\mathcal{C}$ is self-orthogonal, so are $\psi _{\mathbb{F}_{4}+u\mathbb{F}%
_{4}}\left( \mathcal{C}\right) $ and $\varphi
_{\mathbb{F}_{4}+u\mathbb{F}_{4}}\left(
\mathcal{C}\right) $. $\mathcal{C}$ is a Type I (resp. Type II) code over $\mathbb{F}_{4}+u%
\mathbb{F}_{4}$ if and only if $\varphi _{\mathbb{F}_{4}+u\mathbb{F}%
_{4}}\left( \mathcal{C}\right) $ is a Type I (resp. Type II)
$\mathbb{F}_{4}$-code, if and only if $\psi
_{\mathbb{F}_{4}+u\mathbb{F}_{4}}\left( \mathcal{C}\right) $ is a
Type I (resp. Type II) $\mathbb{F}_{2}+u\mathbb{F}_{2}$-code.
Furthermore, the minimum Lee weight of $\mathcal{C}$ is the same as
the minimum Lee weight of $\psi
_{\mathbb{F}_{4}+u\mathbb{F}_{4}}\left( \mathcal{C}\right) $ and $\varphi _{\mathbb{F}%
_{4}+u\mathbb{F}_{4}}\left( \mathcal{C}\right) $.
\end{proposition}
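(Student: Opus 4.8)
The plan is to derive all three assertions from two structural facts about the Gray maps: each of $\psi_{\mathbb{F}_4+u\mathbb{F}_4}$ and $\varphi_{\mathbb{F}_4+u\mathbb{F}_4}$ is a bijective $\mathbb{F}_2$-linear map whose image inner product appears as a coordinate of the original inner product, and each preserves the Lee weight of every individual codeword. I would treat orthogonality first, then the weight and Type statements.

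For $\psi_{\mathbb{F}_4+u\mathbb{F}_4}$ I would write two codewords coordinatewise as $x=a\omega+b\bar{\omega}$ and $y=c\omega+d\bar{\omega}$ with $a,b,c,d\in(\mathbb{F}_2+u\mathbb{F}_2)^n$, and expand $\langle x,y\rangle_E$ using the relations $\omega^2=\bar{\omega}$, $\bar{\omega}^2=\omega$, $\omega\bar{\omega}=1$ and $\omega+\bar{\omega}=1$ that hold in characteristic $2$. Collecting terms in the basis $\{1,\omega\}$ of $\mathbb{F}_4+u\mathbb{F}_4$ over $\mathbb{F}_2+u\mathbb{F}_2$, the coefficient of $\omega$ turns out to be exactly $\langle a,c\rangle+\langle b,d\rangle=\langle\psi_{\mathbb{F}_4+u\mathbb{F}_4}(x),\psi_{\mathbb{F}_4+u\mathbb{F}_4}(y)\rangle$, so $\langle x,y\rangle_E=0$ forces this image inner product to vanish. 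For $\varphi_{\mathbb{F}_4+u\mathbb{F}_4}$ I would write $x=a+bu$, $y=c+du$ with $a,b,c,d\in\mathbb{F}_4^n$ and expand $\langle x,y\rangle_E$ via $u^2=0$ into a constant part $\langle a,c\rangle$ and a $u$-part $\langle a,d\rangle+\langle b,c\rangle$; one then checks that $\langle\varphi_{\mathbb{F}_4+u\mathbb{F}_4}(x),\varphi_{\mathbb{F}_4+u\mathbb{F}_4}(y)\rangle=\langle a,c\rangle+\langle a,d\rangle+\langle b,c\rangle$ after the characteristic-$2$ cancellation of the repeated $\langle b,d\rangle$ term. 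Both summands vanish when $x\perp y$, giving the claim for $\varphi_{\mathbb{F}_4+u\mathbb{F}_4}$. This establishes the first sentence.

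For the weight and Type statements I would invoke the commutative diagram and the definition of the Lee weight as the Hamming weight of the binary image under either composition. For $c\in\mathcal{C}$ the Lee weight of $c$ is $\mathrm{wt}_H(\varphi_{\mathbb{F}_2+u\mathbb{F}_2}(\psi_{\mathbb{F}_4+u\mathbb{F}_4}(c)))$, which is by definition the Lee weight of $\psi_{\mathbb{F}_4+u\mathbb{F}_4}(c)$ over $\mathbb{F}_2+u\mathbb{F}_2$, and it equally equals $\mathrm{wt}_H(\psi_{\mathbb{F}_4}(\varphi_{\mathbb{F}_4+u\mathbb{F}_4}(c)))$, the Lee weight of $\varphi_{\mathbb{F}_4+u\mathbb{F}_4}(c)$ over $\mathbb{F}_4$. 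Since $\psi_{\mathbb{F}_4+u\mathbb{F}_4}$ and $\varphi_{\mathbb{F}_4+u\mathbb{F}_4}$ are bijections from $\mathcal{C}$ onto their images, the Lee weight of each codeword is preserved; hence the three minimum Lee weights agree and the three Lee weight enumerators agree. Type II is precisely the condition that every Lee weight is divisible by $4$, so it transfers verbatim, while Type I is self-dual but not Type II, and self-duality of the images follows by combining the self-orthogonality already proved with the cardinality count $|\mathcal{C}|=4^{\,n}$, which matches the size of a self-dual code in each of the image spaces $(\mathbb{F}_2+u\mathbb{F}_2)^{2n}$ and $\mathbb{F}_4^{2n}$.

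I expect the only genuine work to be the inner-product bookkeeping in the first step: keeping the $\mathbb{F}_4$ multiplication table and the characteristic-$2$ cancellations straight, and confirming that the surviving coefficient is \emph{exactly} the image inner product rather than merely a scalar multiple of it. Once that identity is pinned down for both maps, the minimum-weight equality and the Type I/Type II equivalence are essentially formal consequences of the diagram together with the bijectivity and weight-preservation of the Gray maps.
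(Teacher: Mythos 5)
First, a point of comparison: the paper does not prove this proposition at all --- it is quoted verbatim from \cite{ling} --- so your argument has to stand on its own rather than be matched against an internal proof. Most of it does stand. Writing $\psi=\psi_{\mathbb{F}_4+u\mathbb{F}_4}$ and $\varphi=\varphi_{\mathbb{F}_4+u\mathbb{F}_4}$, your inner-product identities are correct: with $x=a\omega+b\bar{\omega}$, $y=c\omega+d\bar{\omega}$ one gets $\langle x,y\rangle=\left(\langle a,c\rangle+\langle a,d\rangle+\langle b,c\rangle\right)+\left(\langle a,c\rangle+\langle b,d\rangle\right)\omega$, so the $\omega$-coordinate is exactly $\langle\psi(x),\psi(y)\rangle$; and with $x=a+bu$, $y=c+du$ one gets $\langle\varphi(x),\varphi(y)\rangle=\langle a,c\rangle+\langle a,d\rangle+\langle b,c\rangle$, which is the \emph{sum} of the two coordinates of $\langle x,y\rangle=\langle a,c\rangle+\left(\langle a,d\rangle+\langle b,c\rangle\right)u$ (a small imprecision in your opening claim that the image inner product is ``a coordinate'' of the original one, but the conclusion you draw --- both summands vanish when $x\perp y$ --- is valid). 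The self-orthogonality statement and the minimum Lee weight statement then follow exactly as you say, using the permutation-equivalence of the two compositions and bijectivity of the Gray maps.

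The genuine gap is in the ``if and only if.'' Type I/Type II is by definition a property of \emph{self-dual} codes, so the equivalence requires self-duality to transfer in both directions, and your argument (self-orthogonality of $\mathcal{C}$ plus the cardinality count) only gives the direction $\mathcal{C}$ self-dual $\Rightarrow$ images self-dual. The converse is not formal: vanishing of $\langle\varphi(x),\varphi(y)\rangle$ is one equation while $\langle x,y\rangle=0$ is two, and they genuinely differ pairwise; for instance $x=(1,0)$ and $y=(1+u,u)$ in $\left(\mathbb{F}_4+u\mathbb{F}_4\right)^2$ satisfy $\langle\varphi(x),\varphi(y)\rangle=0$ but $\langle x,y\rangle=1+u\neq 0$. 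What rescues the converse is linearity of $\mathcal{C}$ over the ring, which you never invoke. One fix: since $ux=au\in\mathcal{C}$, the identity gives $\langle\varphi(ux),\varphi(y)\rangle=\langle a,c\rangle$, so self-orthogonality of $\varphi(\mathcal{C})$ forces $\langle a,c\rangle=0$ and hence both coordinates of $\langle x,y\rangle$ vanish; closure under multiplication by $\omega$ plays the same role for $\psi$. Alternatively, upgrade orthogonality-preservation to duality-preservation: your identities give $\varphi\left(\mathcal{C}^{\perp}\right)\subseteq\varphi\left(\mathcal{C}\right)^{\perp}$, and since $\varphi$ is a bijection and both sides are linear codes over Frobenius alphabets, the cardinalities $\left|\varphi\left(\mathcal{C}^{\perp}\right)\right|=16^{n}/\left|\mathcal{C}\right|=\left|\varphi\left(\mathcal{C}\right)^{\perp}\right|$ force equality, whence $\varphi(\mathcal{C})$ self-dual implies $\mathcal{C}=\mathcal{C}^{\perp}$. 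With either addition (and the same remark for $\psi$), your proof is complete.
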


\begin{corollary}
Suppose that $\mathcal{C}$ is a self-dual code over
$\mathbb{F}_{4}+u\mathbb{F}_{4}$
of length $n$ and minimum Lee distance $d$. Then $\varphi _{\mathbb{F}_{2}+u%
\mathbb{F}_{2}}\circ \psi _{\mathbb{F}_{4}+u\mathbb{F}_{4}}\left(
\mathcal{C}\right) $
is a binary $\left[ 4n,2n,d\right] $ self-dual code. Moreover, $\mathcal{C}$ and $%
\varphi _{\mathbb{F}_{2}+u\mathbb{F}_{2}}\circ \psi _{\mathbb{F}_{4}+u%
\mathbb{F}_{4}}\left( \mathcal{C}\right) $ have the same weight
enumerator. If $\mathcal{C}$ is Type I (Type II), then so is
$\varphi _{\mathbb{F}_{2}+u\mathbb{F}_{2}}\circ \psi
_{\mathbb{F}_{4}+u\mathbb{F}_{4}}\left( \mathcal{C}\right) $.
\end{corollary}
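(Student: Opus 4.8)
The plan is to follow the self-dual code $\mathcal{C}$ through the two Gray maps that make up the composition $\Phi:=\varphi_{\mathbb{F}_{2}+u\mathbb{F}_{2}}\circ\psi_{\mathbb{F}_{4}+u\mathbb{F}_{4}}$, invoking the Proposition at the first stage and the standard properties of the binary Gray map $\varphi_{\mathbb{F}_{2}+u\mathbb{F}_{2}}$ at the second. Two facts will be used repeatedly: each of these maps is a bijection of the underlying modules, so it preserves cardinalities; and, by the definition of the Lee weight recorded in the preliminaries, the Lee weight of an element of $\mathbb{F}_{4}+u\mathbb{F}_{4}$ is exactly the Hamming weight of its image under $\Phi$, so $\Phi$ transports the Lee weight enumerator of any code to the Hamming weight enumerator of its image verbatim.

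First I would settle self-duality. Since $\mathcal{C}$ is self-dual it is self-orthogonal, so the Proposition gives that $\psi_{\mathbb{F}_{4}+u\mathbb{F}_{4}}(\mathcal{C})$ is self-orthogonal of length $2n$ over $\mathbb{F}_{2}+u\mathbb{F}_{2}$. To upgrade this to self-duality I would count cardinalities: as $\mathcal{C}$ is self-dual of length $n$ over the ring $\mathbb{F}_{4}+u\mathbb{F}_{4}$ of order $16$, the Frobenius property gives $|\mathcal{C}|^{2}=16^{n}$, hence $|\mathcal{C}|=4^{n}$, and since $\psi_{\mathbb{F}_{4}+u\mathbb{F}_{4}}$ is a bijection we get $|\psi_{\mathbb{F}_{4}+u\mathbb{F}_{4}}(\mathcal{C})|=4^{n}$. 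This is precisely the square root of $|\mathbb{F}_{2}+u\mathbb{F}_{2}|^{2n}=4^{2n}$, so the self-orthogonal code $\psi_{\mathbb{F}_{4}+u\mathbb{F}_{4}}(\mathcal{C})$ must in fact be self-dual. Applying $\varphi_{\mathbb{F}_{2}+u\mathbb{F}_{2}}$, which preserves orthogonality and is again a bijection, produces a self-orthogonal binary code $\Phi(\mathcal{C})$ of length $4n$ with $|\mathcal{C}|=2^{2n}$ codewords, i.e.\ of dimension $2n$; a self-orthogonal binary code of length $4n$ and dimension $2n$ equals its own dual, so $\Phi(\mathcal{C})$ is self-dual.

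The remaining claims are then immediate from the weight identity. The Hamming weight enumerator of $\Phi(\mathcal{C})$ coincides with the Lee weight enumerator of $\mathcal{C}$, so the two codes share a weight enumerator and the minimum Hamming distance of $\Phi(\mathcal{C})$ equals the minimum Lee distance $d$ of $\mathcal{C}$; together with the length and dimension just computed this gives the parameters $[4n,2n,d]$. For the Type statement, $\mathcal{C}$ is Type II exactly when all its Lee weights are divisible by $4$, which by the same identity holds exactly when all Hamming weights of $\Phi(\mathcal{C})$ are divisible by $4$, that is, exactly when $\Phi(\mathcal{C})$ is Type II; Type I is the complementary case. The one step demanding genuine care is the promotion of self-orthogonality to self-duality, since the Proposition yields only self-orthogonality of the images — this is exactly where the cardinality count, made possible by the bijectivity of the Gray maps and the assumption that $\mathcal{C}$ is self-dual, does the real work.
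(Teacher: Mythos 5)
Your proof is correct and takes essentially the route the paper intends: the paper states this corollary without any written proof, treating it as an immediate consequence of the preceding Proposition, and your argument simply supplies the standard details left implicit there --- the cardinality count (via the Frobenius property and bijectivity of the Gray maps) that upgrades the Proposition's self-orthogonality to self-duality, and the definitional identity between the Lee weight of an element of $\mathbb{F}_{4}+u\mathbb{F}_{4}$ and the Hamming weight of its binary image, which yields the weight enumerator, minimum distance, and Type claims at once. The only point worth noting is that your dimension count tacitly uses the $\mathbb{F}_{2}$-linearity of the composed Gray map (so that the image is a linear code of dimension $2n$), which holds by inspection of the maps' definitions.
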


In subsequent sections we will be writing tables in which vectors
with elements from the rings $\F_2+u\F_2$ and
$\mathbb{F}_{4}+u\mathbb{F}_{4}$ will appear. In order to avoid
writing long vectors with elements that can be confused with other
elements, we will be describing the elements of this ring in a
shorthand way, which will make the tables more compact.

For the elements of $\F_2+u\F_2$ we will use $0 \rightarrow 0,
1\rightarrow  1, u\rightarrow u$ and $1+u\rightarrow 3$.

For the elements of $\mathbb{F}_{4}+u\mathbb{F}_{4}$, we use the
ordered basis $\{u \omega, \omega, u, 1 \}$ to express the elements
of $\mathbb{F}_{4}+u\mathbb{F}_{4}$ as binary strings of length 4.
Then we will use the hexadecimal number system to describe each
element:

$0 \leftrightarrow 0000$, $1 \leftrightarrow 0001$, $2
\leftrightarrow 0010$, $3 \leftrightarrow 0011$, $4 \leftrightarrow
0100$, $5 \leftrightarrow 0101$, $6 \leftrightarrow 0110$, $7
\leftrightarrow 0111$, $8 \leftrightarrow 1000$, $9 \leftrightarrow
1001$, $A \leftrightarrow 1010$, $B \leftrightarrow 1011$,
$C \leftrightarrow 1100$, $D \leftrightarrow 1101$, $E \leftrightarrow 1110$, $F \leftrightarrow 1111$.\\

For example $1 + u \omega$ corresponds to $1001$, which is
represented by the hexadecimal $9$, while $\omega+u\omega$
corresponds to $1100$, which is represented by $C$.

We are going to use the following extension method for computational
results in the upcoming sections.

\begin{theorem}
\label{extension}$($\cite{frobenius}$)$ Let $R$ be a commutative
ring of characteristic $2$ with identity. Let $C$ be a self-dual
code over $R$ of length $n$ and $G=(r_{i})$ be a $k\times n$
generator matrix for $C$, where $r_{i}$ is the $i$-th row of $G$,
$1\leq i\leq k$. Let $c$ be a unit in $R$ such that $c^{2}=1$ and
$X$ be a vector in $R^{n}$ with $\left\langle
X,X\right\rangle =1$. Let $y_{i}=\left\langle r_{i},X\right\rangle $ for $%
1\leq i\leq k$. Then the following matrix%
\begin{equation*}
\left[
\begin{array}{cc|c}
1 & 0 & X \\ \hline
y_{1} & cy_{1} & r_{1} \\
\vdots  & \vdots  & \vdots  \\
y_{k} & cy_{k} & r_{k}%
\end{array}%
\right] ,
\end{equation*}%
generates a self-dual code $D$ over $R$ of length $n+2$.
\end{theorem}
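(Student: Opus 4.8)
The plan is to verify directly that the matrix $G'$ in the statement generates a self-dual code $D$ of length $n+2$. Since $D$ has $k+1$ rows and the new code must be self-dual inside $R^{n+2}$, I need to check two things: that $G'$ has full rank $k+1$ (so $\dim D = k+1 = (n+2)/2$, the correct self-dual dimension), and that every pair of rows of $G'$ is orthogonal under the Euclidean inner product. Self-duality for a code whose generator matrix satisfies $G' (G')^{T} = 0$ and has the right dimension follows because a self-orthogonal code of length $n+2$ and dimension $(n+2)/2$ must equal its own dual.

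First I would compute the inner products among the rows. Label the top row $R_0 = (1,0,X)$ and the other rows $R_i = (y_i, c y_i, r_i)$ for $1 \le i \le k$. The key computations are:
\begin{equation*}
\langle R_0, R_0\rangle = 1 + 0 + \langle X, X\rangle = 1 + 1 = 0,
\end{equation*}
using $\operatorname{char} R = 2$ and the hypothesis $\langle X, X\rangle = 1$. Next,
\begin{equation*}
\langle R_0, R_i\rangle = 1\cdot y_i + 0\cdot c y_i + \langle X, r_i\rangle = y_i + \langle r_i, X\rangle = y_i + y_i = 0,
\end{equation*}
by the definition $y_i = \langle r_i, X\rangle$ and characteristic $2$. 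Finally, for $1 \le i, j \le k$,
\begin{equation*}
\langle R_i, R_j\rangle = y_i y_j + c^2 y_i y_j + \langle r_i, r_j\rangle = (1 + c^2) y_i y_j + \langle r_i, r_j\rangle.
\end{equation*}
Here $c^2 = 1$ forces $1 + c^2 = 0$, so the first term vanishes, and $\langle r_i, r_j\rangle = 0$ because $C$ is self-dual (hence self-orthogonal) and the $r_i$ are its rows. Thus all row inner products vanish and $D$ is self-orthogonal.

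To finish I would establish that $\dim D = k+1$. Since $C$ is self-dual of length $n$, its dimension is $k = n/2$, so the target dimension is $(n+2)/2 = k+1$, matching the number of rows of $G'$. Full rank of $G'$ follows from the leading block: the first two columns of $G'$ have the top row beginning with $(1,0)$ while every lower row begins with $(y_i, c y_i)$, and the rows $r_1, \dots, r_k$ are already linearly independent (they generate $C$); adjoining the new first row with a leading $1$ in a coordinate where all $r_i$-rows are zero in the first slot cannot be a combination of the others, so the rank is exactly $k+1$. With $D$ self-orthogonal of dimension $(n+2)/2$ over a Frobenius ring (where the standard dimension/cardinality duality $|D|\cdot|D^{\perp}| = |R|^{n+2}$ holds), we get $D = D^{\perp}$, i.e.\ $D$ is self-dual.

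**The main obstacle** is not the inner-product arithmetic, which is routine, but the rank/dimension argument over a general commutative Frobenius ring $R$: ``dimension'' must be interpreted via cardinality, and one should confirm that the appended row genuinely enlarges the code rather than relying on a field-style rank argument. The cleanest route is to argue at the level of cardinalities, showing $|D| = |R| \cdot |C| = |R|^{(n+2)/2}$ using that $X$ contributes a free coordinate, and then invoke the Frobenius-ring property that a self-orthogonal code of cardinality $|R|^{(n+2)/2}$ is forced to be self-dual.
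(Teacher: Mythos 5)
The paper itself gives no proof of this theorem; it is quoted from \cite{frobenius}, so your attempt has to be measured against the standard ``building-up'' proof in that reference. Your orthogonality computations (the three inner products $\langle R_0,R_0\rangle$, $\langle R_0,R_i\rangle$, $\langle R_i,R_j\rangle$) are correct and are exactly the routine part. The genuine gap is the step from self-orthogonality to self-duality. Your main-body argument is a field-style rank argument, and it fails over precisely the rings to which this paper applies the theorem ($\mathbb{F}_2+u\mathbb{F}_2$, $\mathbb{F}_4+u\mathbb{F}_4$): a self-dual code over such a ring need not be a free module, the number of rows $k$ of a generator matrix need not be $n/2$ (for example $C=u\,(\mathbb{F}_2+u\mathbb{F}_2)^n$ is self-dual and its natural generator matrix $uI_n$ has $k=n$ rows), and the rows $r_1,\dots,r_k$ need not be linearly independent. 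Moreover, your pivot claim is false even on its own terms: the rows below the top one have first coordinate $y_i$, not $0$, so the top row's leading $1$ does not sit ``in a coordinate where all $r_i$-rows are zero.''

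You do identify the correct repair in your closing paragraph (count cardinalities and use the Frobenius identity $|D|\cdot|D^{\perp}|=|R|^{n+2}$), but the essential step there is only asserted, not proved. What is needed is that every element of $D$ has the form $\bigl(\alpha+\langle v,X\rangle,\; c\langle v,X\rangle,\; \alpha X+v\bigr)$ for a \emph{unique} pair $(\alpha,v)\in R\times C$: well-definedness holds because $\sum\alpha_i y_i=\langle \sum\alpha_i r_i,X\rangle$ depends only on the codeword $v=\sum\alpha_i r_i$ and not on the chosen coefficients (this is exactly where non-independence of the $r_i$ must be handled), and injectivity uses that $c$ is a unit: from $c\langle v,X\rangle=c\langle v',X\rangle$ one gets $\langle v,X\rangle=\langle v',X\rangle$, then $\alpha=\alpha'$ from the first coordinate and $v=v'$ from the last block. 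This gives $|D|=|R|\cdot|C|=|R|^{(n+2)/2}$, and only then does $D\subseteq D^{\perp}$ together with the Frobenius cardinality identity force $D=D^{\perp}$. Note also that this route requires $R$ finite Frobenius (as in \cite{frobenius}); without carrying out this counting argument, your proof is incomplete.
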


\subsection{Special Matrices}

Circulant matrices play an important role in many applications. In
this section we briefly recall circulant matrices and its variations
in the form of reverse-circulant and $\lambda$-circulant matrices.
For more detailed information on circulant matrices we refer the
reader to \cite{circulant1}, \cite{circulant2} and the references
therein.

With $R$ a commutative ring with identity, let $\sigma$ be the
permutation on $R^n$ that corresponds to the right shift, i.e.
\begin{equation}
\sigma(a_1,a_2, \dots, a_n) = (a_n,a_1, \dots, a_{n-1}).
\end{equation}
A circulant matrix is a square matrix where each row is a
right-circular shift of the previous row. In other words, if
$\overline{r}$ is the first row, a typical circulant matrix is of
the form
\begin{equation}
\left[
\begin{array}{c}
\overline{r} \\ \hline \sigma(\overline{r}) \\ \hline
\sigma^2(\overline{r}) \\ \hline \vdots \\ \hline
\sigma^{n-1}(\overline{r})%
\end{array}
\right].
\end{equation}
It is clear that, with $T$ denoting the permutation matrix
corresponding to the $n$-cycle $(123...n)$, a circulant matrix with
first row $(a_1,a_2, \dots, a_n)$ can be expressed as a polynomial
in $T$ as:
\begin{equation*}
a_1I_n+a_2T+a_3T^2+ \cdots + a_nT^{n-1},
\end{equation*}
with $T^n=I_n$. This shows that circulant matrices commute.

A reverse-circulant matrix is a square matrix where each row is a
left-circular shift of the previous row. It is clear to see that if $%
\overline{r}$ is the first row, a reverse-circulant matrix is of the
form
\begin{equation}
\left[
\begin{array}{c}
\overline{r} \\ \hline \sigma^{-1}(\overline{r}) \\ \hline
\sigma^{-2}(\overline{r}) \\ \hline \vdots \\ \hline
\sigma^{-(n-1)}(\overline{r})%
\end{array}
\right].
\end{equation}

An $n\times n$ square matrix $A$ is called $\lambda $-circulant if
every row is a $\lambda $-cyclic shift of the previous one, in other
words $A $ is in the following form;
\begin{equation*}
\left(
\begin{array}{ccccc}
a_{1} & a_{2} & a_{3} & \cdots & a_{n} \\
\lambda a_{n} & a_{1} & a_{2} & \cdots & a_{n-1} \\
\lambda a_{n-1} & \lambda a_{n} & a_{1} & \cdots & a_{n-2} \\
\vdots & \vdots & \vdots & \ddots & \vdots \\
\lambda a_{2} & \lambda a_{3} & \lambda a_{4} & \cdots & a_{1}%
\end{array}%
\right) .
\end{equation*}

$\lambda$-circulant matrices are an immediate generalization of
circulant matrices and like circulant matrices, two
$\lambda$-circulant matrices also commute.

$\lambda$-reverse-circulant matrices can also be defined in exactly
the same way as an extension of reverse circulant matrices.

The following lemma gives us an important result that we will be
using in the upcoming sections.

\begin{lemma}
\cite{kyp}\label{lemma} Let $A$ and $C$ be $\lambda $-circulant
matrices. Then $C^{\prime }=CR$ is a $\lambda $-reverse-circulant
matrix~and it is symmetric. Here $R$ is the back-diagonal matrix.
Moreover, $AC^{\prime }-C^{\prime }A^{T}=0$. Equivalently,
$ARC^{T}-CRA^{T}=0$.\qquad
\end{lemma}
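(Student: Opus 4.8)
The plan is to reduce the whole statement to one structural fact---that for every $\lambda$-circulant matrix $B$ the product $BR$ is a symmetric $\lambda$-reverse-circulant matrix---and then to obtain the commutation identity almost for free from the fact, already noted above, that $\lambda$-circulant matrices commute. I would begin by recording the elementary properties of the back-diagonal matrix, namely $R^{T}=R$, $R^{2}=I$, and the observation that right multiplication by $R$ reverses columns, so that $(BR)_{i,j}=B_{i,\,n+1-j}$. I would also write the entries of a $\lambda$-circulant matrix $C$ with first row $(c_{1},\dots,c_{n})$ as $C_{i,j}=c_{\,j-i+1}$ when $j\geq i$ and $C_{i,j}=\lambda\,c_{\,n+j-i+1}$ when $j<i$, so that $C_{i,j}$ depends only on $j-i \pmod{n}$, with the factor $\lambda$ recording a wrap-around.

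Substituting this into $(CR)_{i,j}=C_{i,\,n+1-j}$ and splitting into the cases $i+j\leq n+1$ and $i+j\geq n+2$, I would find $(CR)_{i,j}=c_{\,n+2-i-j}$ in the first case and $(CR)_{i,j}=\lambda\,c_{\,2n+2-i-j}$ in the second. Thus the entry depends only on $i+j \pmod{n}$, with a factor $\lambda$ on the wrap, which is precisely the defining pattern of a $\lambda$-reverse-circulant matrix; this proves the first assertion. Since both the surviving index and the case distinction are symmetric under interchanging $i$ and $j$, we get $(CR)_{i,j}=(CR)_{j,i}$, so $C'=CR$ is symmetric, proving the second assertion.

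For the commutation identity I would first rephrase the symmetry of $CR$ as $CR=(CR)^{T}=RC^{T}$, and then left-multiply by $R$ and use $R^{2}=I$ to obtain the compact form $C^{T}=RCR$; the identical argument applied to the $\lambda$-circulant matrix $A$ gives $A^{T}=RAR$. Using $R^{2}=I$ I would then compute
\[
ARC^{T}=AR(RCR)=ACR, \qquad CRA^{T}=CR(RAR)=CAR .
\]
Since $A$ and $C$ are $\lambda$-circulant they commute, so $AC=CA$ and hence $ACR=CAR$; this yields $ARC^{T}=CRA^{T}$, that is, $ARC^{T}-CRA^{T}=0$. To recover the equivalent form $AC'-C'A^{T}=0$, I would substitute $C'=CR$ and use the symmetry $CR=RC^{T}$ in the first term, turning $AC'-C'A^{T}=A(CR)-(CR)A^{T}$ into $ARC^{T}-CRA^{T}$, which has just been shown to vanish.

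The one genuinely delicate step is the index and $\lambda$-factor bookkeeping in evaluating $(CR)_{i,j}$: one must verify that the wrap-around factor $\lambda$ attaches in a manner symmetric in $i$ and $j$, since it is exactly this that makes $CR$ simultaneously reverse-circulant and symmetric. Once that computation is in place the commutation identity is purely formal, depending only on $R^{2}=I$, $R^{T}=R$, and the commutativity of $\lambda$-circulant matrices.
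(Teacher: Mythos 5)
Your proof is correct. One point of context: this paper does not prove the lemma at all --- it is imported verbatim from \cite{kyp} as a known result --- so there is no in-paper argument to compare against; your write-up is a legitimate self-contained verification of the cited fact. The two pillars of your argument both check out. First, the entry computation: with $C_{i,j}=c_{j-i+1}$ for $j\geq i$ and $C_{i,j}=\lambda c_{n+j-i+1}$ for $j<i$, and $(CR)_{i,j}=C_{i,n+1-j}$, one indeed gets $(CR)_{i,j}=c_{n+2-i-j}$ when $i+j\leq n+1$ and $(CR)_{i,j}=\lambda c_{2n+2-i-j}$ when $i+j\geq n+2$; since both the surviving index and the case split depend only on $i+j$, the matrix $CR$ is simultaneously $\lambda$-reverse-circulant and symmetric, exactly as you say. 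Second, the formal part: from symmetry $CR=RC^{T}$ you get $C^{T}=RCR$ and $A^{T}=RAR$, so $ARC^{T}=ACR$ and $CRA^{T}=CAR$, and the identity then reduces to $AC=CA$, which holds because $\lambda$-circulant matrices are polynomials in the same $\lambda$-shift matrix $T_{\lambda}$ (with $T_{\lambda}^{n}=\lambda I$), a fact the paper states just before the lemma. Your reduction also makes the ``equivalently'' clause in the statement transparent, since $AC^{\prime}-C^{\prime}A^{T}$ and $ARC^{T}-CRA^{T}$ are literally the same expression once $C^{\prime}=CR=RC^{T}$ is substituted. This is the natural (and almost certainly the original) route; nothing is missing.
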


A special case of Lemma \ref{lemma} is as follows;

\begin{lemma}
\label{commute} Symmetric circulant matrices commute with reverse
circulant matrices.
\end{lemma}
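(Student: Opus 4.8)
The plan is to derive Lemma~\ref{commute} directly from Lemma~\ref{lemma} by specializing the parameter $\lambda$ and identifying the two matrix types occurring there with the hypotheses of the statement. Recall that Lemma~\ref{lemma} is stated for $\lambda$-circulant matrices; setting $\lambda = 1$ recovers ordinary circulant and reverse-circulant matrices, so that case is the one I would invoke. The claim asserts that a \emph{symmetric} circulant matrix commutes with any reverse-circulant matrix, so the task reduces to showing how symmetry lets me convert the conclusion of Lemma~\ref{lemma}, which is naturally phrased as a relation involving a transpose, into a genuine commutation $AB = BA$.

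First I would fix notation: let $A$ be a symmetric circulant matrix and let $B$ be a reverse-circulant matrix, both $n \times n$ over the commutative ring $R$. The key structural fact I would use is that every reverse-circulant matrix can be written as $B = CR$ for some circulant matrix $C$, where $R$ is the back-diagonal (exchange) matrix, since right-multiplication by $R$ reverses the column order and turns the right-cyclic shifts defining $C$ into the left-cyclic shifts defining $B$. With $\lambda = 1$, Lemma~\ref{lemma} then gives me $A C' - C' A^T = 0$ where $C' = CR = B$; that is, $AB = B A^T$. This is the half of the argument that is essentially handed to me by the previous lemma.

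Second, I would invoke the symmetry hypothesis $A = A^T$. Substituting into $AB = BA^T$ immediately yields $AB = BA$, which is exactly the assertion that $A$ and $B$ commute. So the entire proof is a two-line specialization: take $\lambda = 1$ in Lemma~\ref{lemma} to get $AB = BA^T$, then use $A^T = A$.

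I do not expect any serious obstacle here, since the statement is explicitly flagged as ``a special case of Lemma~\ref{lemma}.'' The only point requiring a little care is the bookkeeping that confirms $B = CR$ with $C$ circulant is genuinely the general form of a reverse-circulant matrix, and that the symmetric circulant $A$ plays the role of the matrix $A$ in Lemma~\ref{lemma} rather than of $C$; I would make sure the roles are matched so that the transpose falls on the symmetric factor. Once that correspondence is pinned down, the symmetry hypothesis does all the remaining work and the commutation follows at once.
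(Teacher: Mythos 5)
Your proof is correct and follows exactly the route the paper intends: the paper offers no explicit argument, merely noting that Lemma~\ref{commute} is ``a special case of Lemma~\ref{lemma},'' and your write-up simply makes that specialization explicit (take $\lambda=1$, write the reverse-circulant $B$ as $CR$ with $C$ circulant, get $AB=BA^{T}$ from Lemma~\ref{lemma}, and use $A=A^{T}$). The one detail you flag --- that every reverse-circulant matrix is of the form $CR$ for a circulant $C$ --- is indeed the only bookkeeping needed, and it holds since right-multiplication by the exchange matrix $R$ carries the circulant with reversed first row onto the given reverse-circulant matrix.
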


\subsection{On four circulant construction}
The four-circulant construction, which was inspired by orthogonal
designs, was introduced in \cite{betsumiya}:
\begin{theorem}
\cite{betsumiya} \label{four} Let $A$ and $B$ be $n\times n$
circulant matrices over $\mathbb{F}_{p}$ such that
$AA^{T}+BB^{T}=-I_{n}$ then the
matrix%
\begin{equation*}
G=\left( I_{2n\ }%
\begin{array}{|cc}
A & B \\
-B^{T} & A^{T}%
\end{array}%
\right)
\end{equation*}%
generates a self-dual code over $\mathbb{F}_{p}$.
\end{theorem}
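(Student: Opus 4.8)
The plan is to verify the defining property of a self-dual code directly, namely that $GG^{T}=0$ over $\mathbb{F}_{p}$, which together with the fact that $G$ has full rank $2n$ (guaranteed by the presence of the identity block $I_{2n}$) forces the row space $\mathcal{C}$ to satisfy $\mathcal{C}\subseteq\mathcal{C}^{\bot}$ and $\dim\mathcal{C}=2n=\tfrac{1}{2}(4n)$, hence $\mathcal{C}=\mathcal{C}^{\bot}$. Writing $G=\left(I_{2n}\mid M\right)$ with
\begin{equation*}
M=\left(
\begin{array}{cc}
A & B \\
-B^{T} & A^{T}
\end{array}
\right),
\end{equation*}
the Euclidean inner product of rows is encoded by $GG^{T}=I_{2n}+MM^{T}$, so the whole claim reduces to showing $MM^{T}=-I_{2n}=I_{2n}$ (since $\mathrm{char}=p$ and we work with the identity up to sign as in the hypothesis).

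First I would compute the four $n\times n$ blocks of the product $MM^{T}$ explicitly. With $M^{T}=\left(\begin{smallmatrix}A^{T}&-B\\B^{T}&A\end{smallmatrix}\right)$, the diagonal blocks are both $AA^{T}+BB^{T}$, which equals $-I_{n}$ by hypothesis, giving exactly the desired $-I_{2n}$ on the diagonal. The key point is then that the off-diagonal blocks vanish: the top-right block works out to $-AB+BA$ and the bottom-left to $-B^{T}A^{T}+A^{T}B^{T}$. Here is where the structural hypothesis enters: because $A$ and $B$ are both circulant matrices, they commute (as recalled in the excerpt, circulant matrices are polynomials in the single matrix $T$ and hence commute), so $AB=BA$ and the top-right block is zero; transposing gives $A^{T}B^{T}=B^{T}A^{T}$ and the bottom-left block is zero as well.

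Assembling these computations yields $MM^{T}=-I_{2n}$, hence $GG^{T}=I_{2n}+MM^{T}=I_{2n}-I_{2n}=0$, so every pair of rows of $G$ is orthogonal and $\mathcal{C}$ is self-orthogonal. Since the $I_{2n}$ block shows $G$ has rank $2n$, we conclude $\mathcal{C}$ is self-dual.

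I do not expect a serious obstacle here; the proof is essentially a block-matrix identity. The one point that genuinely needs the hypotheses (rather than being formal bookkeeping) is the vanishing of the off-diagonal blocks, which relies precisely on the commutativity $AB=BA$ of circulant matrices. It is worth flagging that this is exactly the feature the authors will need to relax or replace when they pass to their generalized construction: once $A$ and $B$ are allowed to be $\lambda$-circulant or reverse-circulant, one must substitute a commutativity-type relation such as the identity $ARC^{T}=CRA^{T}$ from Lemma~\ref{lemma} to force the analogous cross-terms to cancel.
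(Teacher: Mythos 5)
Your proof is correct and follows the same route the paper itself takes: the paper states Theorem \ref{four} without proof (citing \cite{betsumiya}), but its proof of the generalization, Theorem \ref{general}, is precisely your block computation --- show that $MM^{T}$ equals the required multiple of the identity, with the diagonal blocks handled by the hypothesis $AA^{T}+BB^{T}=-I_{n}$ and the off-diagonal blocks vanishing because circulant matrices commute. One minor slip to fix: the parenthetical claim $-I_{2n}=I_{2n}$ is false over $\mathbb{F}_{p}$ for odd $p$ (it holds only in characteristic $2$, the setting of Theorem \ref{general}), but it is harmless since your computation actually establishes $MM^{T}=-I_{2n}$ and hence $GG^{T}=I_{2n}+MM^{T}=0$, which is all that is needed.
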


Recently, the four circulant construction was applied on $\mathbb{F}_{2}+u%
\mathbb{F}_{2}$ in \cite{karadeniz4}, which resulted in a new binary
self-dual code of length 64.

The following is a variation of the four circulant construction,
which was used in \cite{kyp} to obtain new extremal binary self-dual
codes.

\begin{theorem}
\cite{kyp} \label{modified}Let $\lambda $ be a unit of the
commutative
Frobenius ring $\mathcal{R}$, $A$ be a $\lambda $-circulant matrix and $B$ be a $%
\lambda $-reverse-circulant matrix with $AA^{T}+BB^{T}=-I_{n}$ then
the matrix
\begin{equation*}
G=\left( I_{2n\ }%
\begin{array}{|cc}
A & B \\
-B & A%
\end{array}%
\right)
\end{equation*}%
generates a self-dual code $\mathcal{C}$ over $\mathcal{R}$.
\end{theorem}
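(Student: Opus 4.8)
The plan is to show that the matrix $G=\left(I_{2n} \mid \begin{smallmatrix} A & B \\ -B & A \end{smallmatrix}\right)$ generates a self-dual code by verifying the two standard conditions: that the rows of $G$ are pairwise orthogonal (equivalently $GG^T=0$), and that the code has the correct dimension $2n$ (which is immediate, since the presence of $I_{2n}$ in the left block forces the $2n$ rows to be linearly independent). Since $\mathcal{R}$ is a commutative Frobenius ring, a code of length $4n$ that is self-orthogonal and has dimension $2n$ must in fact be self-dual, so self-orthogonality plus the dimension count suffices.

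The computation of $GG^T$ reduces, after writing $G$ in block form, to evaluating the product of the right-hand block with its transpose and adding $I_{2n}$. Concretely, if I set $M=\left(\begin{smallmatrix} A & B \\ -B & A \end{smallmatrix}\right)$, then $GG^T = I_{2n} + MM^T$, so I need to verify that $MM^T = -I_{2n} = I_{2n}$ (recalling characteristic $2$). Computing $MM^T$ block by block, the two diagonal blocks each come out to $AA^T + BB^T$, which equals $-I_n$ by hypothesis, giving the desired diagonal. The two off-diagonal blocks yield $AB^T - BA^T$ (up to sign) and its negative, so the whole thing reduces to self-duality provided these off-diagonal blocks vanish.

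The main obstacle, and the only nonroutine step, is showing that the off-diagonal blocks vanish, i.e. that $AB^T = BA^T$. This is exactly where the hypotheses on the shapes of $A$ and $B$ enter: $A$ is $\lambda$-circulant and $B$ is $\lambda$-reverse-circulant. I would invoke Lemma \ref{lemma} here. That lemma tells us that a product of a $\lambda$-circulant matrix with the back-diagonal matrix $R$ is a symmetric $\lambda$-reverse-circulant matrix, and more importantly it gives the commutation identity $ARC^T - CRA^T = 0$ for $\lambda$-circulant $A,C$. Writing $B = CR$ for an appropriate $\lambda$-circulant matrix $C$ (using that $R^2=I$ and $R^T=R$, so $B^T = RC^T$), the product $AB^T = ARC^T$ and $BA^T = CRA^T$, and the identity from Lemma \ref{lemma} gives $AB^T - BA^T = ARC^T - CRA^T = 0$ at once.

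Once the off-diagonal blocks are shown to vanish and the diagonal blocks are confirmed to equal $-I_n$ via the hypothesis $AA^T+BB^T=-I_n$, we conclude $MM^T = -I_{2n}$, hence $GG^T = I_{2n}+MM^T = 0$ over a ring of characteristic $2$. This establishes self-orthogonality; combined with the dimension count from the identity block, it follows that $\mathcal{C}$ is self-dual, completing the proof. The whole argument parallels the proof of the classical four-circulant construction in Theorem \ref{four}, with the circulant-commutation step replaced by the $\lambda$-circulant / $\lambda$-reverse-circulant commutation supplied by Lemma \ref{lemma}.
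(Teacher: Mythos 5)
Your proof is correct, but note first that the paper itself never proves Theorem \ref{modified}: it is quoted from \cite{kyp} as background, and the only proof of this kind actually written out in the paper is that of its generalization, Theorem \ref{general}. Measured against that proof, your route is essentially the same one: reduce self-duality of the code generated by $\left( I_{2n} \mid M \right)$ to the block identity $MM^{T}=-I_{2n}$, obtain the diagonal blocks from the hypothesis $AA^{T}+BB^{T}=-I_{n}$, and make the off-diagonal blocks $\pm\left( AB^{T}-BA^{T}\right)$ vanish via Lemma \ref{lemma}; this is precisely the role that lemma plays for the cross terms in the paper's proof of Theorem \ref{general}, so your argument is the natural adaptation of the paper's method rather than a genuinely different one. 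Two minor points are worth fixing. First, your appeals to characteristic $2$ are unnecessary and not actually granted by the statement, which assumes only a commutative Frobenius ring (hence the genuine minus signs in $-I_{n}$ and $-B$); fortunately they are also not needed, since once $MM^{T}=-I_{2n}$ is established, $GG^{T}=I_{2n}+MM^{T}=0$ holds in any characteristic. Second, the factorization $B=CR$ with $C$ a $\lambda$-circulant matrix is asserted but not justified: Lemma \ref{lemma} only says that $CR$ is $\lambda$-reverse-circulant, not that every $\lambda$-reverse-circulant matrix arises this way. The gap closes in one line: since a $\lambda$-reverse-circulant matrix is determined by its first row, taking $C$ to be the $\lambda$-circulant matrix whose first row is that of $B$ reversed gives $CR=B$ by Lemma \ref{lemma}, and then $C=BR$ because $R^{2}=I_{n}$. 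With that sentence added, your proof is complete.
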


\section{A generalization of the four circulant construction\label{theory}}

In this section, we give a generalization of the four circulant
construction. We also propose two specific variations of the
construction.

\begin{theorem}
\label{general}Let $\mathcal{R}$ be a commutative Frobenius ring of
characteristics $2$, $A$ and $B$ be circulant matrices and $C$ be a
reverse
circulant matrix. Then the code generated by%
\begin{equation*}
G:=\left( \enspace I_{2n}\enspace%
\begin{array}{|cc}
A & B+C \\
B^{T}+C & A^{T}%
\end{array}%
\right)
\end{equation*}%
is self-dual when $AA^{T}+BB^{T}+C^{2}=I_{n}$ and $AC=CA$.
\end{theorem}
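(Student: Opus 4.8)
The plan is to show that $G=(\,I_{2n}\mid M\,)$, with
\[
M=\begin{pmatrix} A & B+C \\ B^{T}+C & A^{T}\end{pmatrix},
\]
generates a self-dual code by separating the two standard ingredients. Because of the leading block $I_{2n}$, the code is free of rank $2n$ inside $\mathcal{R}^{4n}$ and hence has cardinality $|\mathcal{R}|^{2n}$; over a commutative Frobenius ring the identity $|C|\cdot|C^{\perp}|=|\mathcal{R}|^{4n}$ then forces $|C^{\perp}|=|C|$, so a proof of self-orthogonality already upgrades to self-duality. Since $GG^{T}=I_{2n}+MM^{T}$ and $\mathcal{R}$ has characteristic $2$, the entire problem collapses to verifying $MM^{T}=I_{2n}$.

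First I would record the one structural fact that makes the transpose manageable: a reverse circulant matrix is symmetric, so $C^{T}=C$, and the transpose of a circulant matrix is again circulant. This yields
\[
M^{T}=\begin{pmatrix} A^{T} & B+C \\ B^{T}+C & A\end{pmatrix},
\]
after which I would expand $MM^{T}$ into its four $n\times n$ blocks and check each equals $I_{n}$ on the diagonal and $0$ off the diagonal.

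For the diagonal blocks, the top-left block is $AA^{T}+(B+C)(B^{T}+C)=AA^{T}+BB^{T}+C^{2}+(BC+CB^{T})$; the first three terms give $I_{n}$ by the hypothesis $AA^{T}+BB^{T}+C^{2}=I_{n}$, while the cross term dies because Lemma~\ref{lemma}, applied to the circulant $B$ and the reverse circulant $C$, gives $BC=CB^{T}$, hence $BC+CB^{T}=0$ in characteristic $2$. The bottom-right block $A^{T}A+(B^{T}+C)(B+C)$ is handled symmetrically: a circulant commutes with its transpose, so $A^{T}A=AA^{T}$ and $B^{T}B=BB^{T}$, and applying Lemma~\ref{lemma} to the circulant $B^{T}$ and $C$ gives $B^{T}C=CB$, killing the remaining cross term. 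For the off-diagonal blocks, the top-right block is $A(B+C)+(B+C)A=(AB+BA)+(AC+CA)$, where $AB=BA$ since circulants commute and $AC=CA$ is assumed; the bottom-left block vanishes the same way once I transpose the hypothesis $AC=CA$ and use $C^{T}=C$ to deduce $A^{T}C=CA^{T}$. This gives $MM^{T}=I_{2n}$ and therefore $GG^{T}=0$.

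The delicate point, and the step I expect to be the real obstacle, is the treatment of the mixed products of a circulant matrix with a reverse circulant matrix, where ordinary commutativity fails. Everything hinges on the twisted identities $BC=CB^{T}$ and $B^{T}C=CB$ coming from Lemma~\ref{lemma}, together with the symmetry $C^{T}=C$; the care required is in tracking precisely which factor is transposed in each identity and in pairing it correctly with the transposed hypothesis $A^{T}C=CA^{T}$ so that each cross term cancels in characteristic $2$.
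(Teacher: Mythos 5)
Your proposal is correct and follows essentially the same route as the paper: expand $MM^{T}$ into its four $n\times n$ blocks, cancel the circulant products $AB+BA$ and $B^{T}A^{T}+A^{T}B^{T}$ by commutativity, kill the mixed terms $BC+CB^{T}$ and $B^{T}C+CB$ via Lemma~\ref{lemma}, and invoke the two hypotheses. You are in fact slightly more thorough than the paper, which leaves implicit both the upgrade from self-orthogonality to self-duality and the transposed identities $A^{T}C=CA^{T}$, $A^{T}A+B^{T}B+C^{2}=I_{n}$ that you spell out.
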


\begin{proof}
Let $M:=\left(
\begin{array}{cc}
A & B+C \\
B^{T}+C & A^{T}%
\end{array}%
\right) $. We are to show that $MM^{T}=I_{2n}$ under the given
conditions. Indeed
\begin{eqnarray*}
MM^{T} &=&\left(
\begin{array}{cc}
AA^{T}+BB^{T}+BC+CB^{T}+C^{2} & AB+AC+BA+CA \\
B^{T}A^{T}+CA^{T}+A^{T}B^{T}+A^{T}C & B^{T}B+B^{T}C+CB+C^{2}+A^{T}A%
\end{array}%
\right) \\
&=&\left(
\begin{array}{cc}
AA^{T}+BB^{T}+C^{2} & AC+CA \\
CA^{T}+A^{T}C & B^{T}B+C^{2}+A^{T}A%
\end{array}%
\right) =\left(
\begin{array}{cc}
I_{n} & 0_{n} \\
0_{n} & I_{n}%
\end{array}%
\right) .
\end{eqnarray*}

The above equality holds because we have $AB=BA$ and
$A^{T}B^{T}=B^{T}A^{T}$ since circulant matrices commute and also
because by Lemma \ref{lemma} $BC+CB^{T}=0$ and $B^{T}C+CB=0$.
\end{proof}
We obtain the following corollary when $A$ is a symmetric circulant
matrix:
\begin{corollary}
\label{symmetric}Let $\mathcal{R}$ be a commutative Frobenius ring
of characteristic $2$, $A$ be a symmetric circulant matrix, $B$ be a
circulant matrix and $C$ be a reverse circulant matrix. Then the
code generated by
\begin{equation*}
G:=\left( \enspace I_{2n}\enspace%
\begin{array}{|cc}
A & B+C \\
B^{T}+C & A%
\end{array}%
\right)
\end{equation*}%
is a self-dual code over $\mathcal{R}$ whenever
$A^{2}+BB^{T}+C^{2}=I_{n}$.
\end{corollary}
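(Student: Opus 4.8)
The plan is to obtain this corollary as a direct specialization of Theorem \ref{general} to the case where the circulant matrix $A$ is symmetric. First I would note that a symmetric circulant matrix satisfies $A^{T}=A$, so the bottom-right block $A^{T}$ in the generator matrix of Theorem \ref{general} collapses to $A$, and the matrix there becomes precisely the matrix
\[
G = \left( \enspace I_{2n}\enspace
\begin{array}{|cc}
A & B+C \\
B^{T}+C & A
\end{array}
\right)
\]
displayed in the statement of the corollary. For the same reason, the first self-duality hypothesis $AA^{T}+BB^{T}+C^{2}=I_{n}$ of Theorem \ref{general} simplifies to $A^{2}+BB^{T}+C^{2}=I_{n}$, which is exactly the condition assumed here.

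The remaining task is to verify that the second hypothesis of Theorem \ref{general}, the commutativity $AC=CA$, is satisfied automatically. This is immediate from Lemma \ref{commute}: since $A$ is a symmetric circulant matrix and $C$ is a reverse circulant matrix, the two commute. Hence both hypotheses of Theorem \ref{general} hold, and it follows at once that $G$ generates a self-dual code over $\mathcal{R}$.

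Since the argument reduces entirely to invoking two earlier results, there is no genuine computational obstacle. The only point requiring attention --- and it is a mild one --- is recognizing that the commutativity condition $AC=CA$, which had to be imposed as a separate hypothesis in Theorem \ref{general}, now comes for free from the symmetry of $A$ via Lemma \ref{commute}; this is precisely what allows the corollary to be stated with a single numerical condition.
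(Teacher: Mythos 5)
Your proposal is correct and follows exactly the paper's own route: the paper proves this corollary by citing Theorem \ref{general} together with Lemma \ref{commute}, which is precisely your argument (symmetry of $A$ collapses $A^{T}$ to $A$ and the condition to $A^{2}+BB^{T}+C^{2}=I_{n}$, while Lemma \ref{commute} supplies $AC=CA$ for free). Your write-up simply makes explicit the details that the paper leaves to the reader.
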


\begin{proof}
It follows by Theorem \ref{general} and Lemma \ref{commute}.
\end{proof}

We may also propose another special case of Theorem \ref{general}.

\begin{corollary}
\label{czero}Let $\mathcal{C}$ be a self-dual four circulant code of length $4n$ over $%
\mathcal{R}$ (of characteristic 2) generated by
\begin{equation*}
G:=\left( \enspace I_{2n}\enspace%
\begin{array}{|cc}
A & B \\
B^{T} & A^{T}%
\end{array}%
\right) .
\end{equation*}%
Then for any reverse circulant matrix $C$, which commutes with $A$
and
satisfies $C^{2}=0$, the matrix%
\begin{equation*}
\left( \enspace I_{2n}\enspace%
\begin{array}{|cc}
A & B+C \\
B^{T}+C & A^{T}%
\end{array}%
\right)
\end{equation*}%
generates a self-dual code $\mathcal{C}^{\prime }$.
\end{corollary}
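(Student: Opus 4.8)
The goal is to prove Corollary \ref{czero}: given a self-dual four-circulant code with $AA^T + BB^T = I_n$, adding a reverse-circulant matrix $C$ that commutes with $A$ and satisfies $C^2 = 0$ still yields a self-dual code. The natural plan is to realize this as a direct instance of Theorem \ref{general}, which already handles the general matrix $\left(\begin{smallmatrix} A & B+C \\ B^T+C & A^T \end{smallmatrix}\right)$ and gives self-duality under the two conditions $AA^T + BB^T + C^2 = I_n$ and $AC = CA$. So the entire content of the proof is to verify that the hypotheses of the corollary force both of these conditions to hold.

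First I would record what the four-circulant self-duality hypothesis supplies. Since $\mathcal{C}$ generated by $\left(\,I_{2n}\,\big|\,\begin{smallmatrix} A & B \\ B^T & A^T \end{smallmatrix}\right)$ is self-dual over a ring of characteristic $2$, the standard condition (the characteristic-2 form of Theorem \ref{four}) gives $AA^T + BB^T = I_n$. I would then substitute the corollary's two explicit assumptions: $C^2 = 0$ and $AC = CA$. Plugging $C^2 = 0$ into the quadratic condition needed by Theorem \ref{general} yields $AA^T + BB^T + C^2 = AA^T + BB^T = I_n$, which is exactly the first required equation. The commuting condition $AC = CA$ is literally the second hypothesis of Theorem \ref{general}. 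Hence both conditions of the general theorem are met, and its conclusion gives that $\mathcal{C}'$ is self-dual.

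There is essentially no obstacle here beyond bookkeeping; the work has all been done in Theorem \ref{general}. The one point that deserves a word is that Theorem \ref{general} also silently uses, via Lemma \ref{lemma}, that $B$ is circulant and $C$ is reverse-circulant so that the cross terms $BC + CB^T$ and $B^T C + CB$ vanish. In the corollary $B$ is indeed the circulant block of a four-circulant code and $C$ is assumed reverse-circulant, so Lemma \ref{lemma} applies verbatim and no separate verification is needed. Thus the only mildly nontrivial step is recognizing that the characteristic-2 self-duality of the original four-circulant code is precisely $AA^T + BB^T = I_n$, after which the substitution $C^2 = 0$ closes the argument.

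In short, the proof is a one-line reduction: \emph{It follows immediately from Theorem \ref{general}, since the self-duality of the four-circulant code $\mathcal{C}$ gives $AA^T + BB^T = I_n$, whence the hypothesis $C^2 = 0$ yields $AA^T + BB^T + C^2 = I_n$, and $AC = CA$ is assumed; both conditions of Theorem \ref{general} therefore hold.} I expect the author's proof to be exactly this short, and I would not attempt to re-derive the matrix product $MM^T$ from scratch, since that computation is already carried out in the proof of Theorem \ref{general}.
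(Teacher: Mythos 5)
Your reduction is correct and is exactly the argument the paper intends: the paper states Corollary \ref{czero} as ``another special case of Theorem \ref{general}'' and gives no written proof, the implicit one being precisely your substitution of $C^{2}=0$ and $AC=CA$ into the hypotheses $AA^{T}+BB^{T}+C^{2}=I_{n}$ and $AC=CA$ of Theorem \ref{general}. The only step you gloss over slightly is that you need the \emph{converse} direction of Theorem \ref{four} (self-duality of $\mathcal{C}$ forces $AA^{T}+BB^{T}=I_{n}$), but this is immediate from $GG^{T}=0$, whose top-left block reads $I_{n}+AA^{T}+BB^{T}=0$, so nothing essential is missing.
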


Corollary \ref{czero} allows us to reduce the size of the search
field for that specific variation. We may consider a four circulant
code and search for reverse circulant matrices $C$ under the
restrictions.

\begin{example}
Let $n=7$ and $\mathcal{C}$ be the four circulant code where $A=I_{7}$ and $%
B=0_{7}$, i.e. $r_{A}=\left( 1,0,0,0,0,0,0\right) $ and
$r_{B}=\left(
0,0,0,0,0,0,0\right) $. The code $\mathcal{C}$ is binary self-dual with parameters $%
\left[ 28,14,2\right]$. Let $C$ be the reverse circulant matrix with
first row $r_{C}=\left( 1110100\right) $ which satisfies
$C^{2}=0_{7}$ and obviously it commutes with $A$. Then the code
$\mathcal{C}^{\prime }$ obtained by Corollary \ref{czero} is an
extremal binary self-dual $\left[ 28,14,6\right]$ code with an
automorphism group of order $2^{6}\times 3\times 7$.
\end{example}

\section{Computational Results}

In this section, we provide examples to demonstrate the
effectiveness of the methods introduced in Section \ref{theory}. We
also compare the methods with the well known four circulant
construction for various lengths over the alphabets
$\mathbb{F}_{2}$, $\mathbb{F}_{2}+u\mathbb{F}_{2}$ and
$\mathbb{F}_{4}+u\mathbb{F}_{4}$ .
\subsection{Comparison of the methods over $\mathbb{F}_{2}$ for length 40}

We construct Type I self-dual codes of length 40 by the four
circulant construction and also by the methods given in Section
\ref{theory}. The weight enumerator of a singly even binary
self-dual code of parameters $[40,20,8]$ is in the following form:
\begin{eqnarray*}
W_{40} &=&1+\left( 125+16\beta \right) y^{8}+\left( 1664-64\beta
\right) y^{10}+\cdots ,0\leq \beta \leq 10
\end{eqnarray*}%
The existence of a code with $\beta=9$ is still an open problem.
There are codes for the other values. In Table \ref{tab:40-1}, we
list four circulant self-dual binary codes of length 40.
\begin{table}[H]
\caption{$[40,20,8]$ four circulant codes} \label{tab:40-1}
\begin{center}
\begin{tabular}{||c|c|c|c||c||}
\hline
$\mathcal{C}_{40,i}$ & $r_{A}$ & $r_{B}$ & $\left\vert Aut(\mathcal{C}%
_{40,i})\right\vert $ & $\beta $ in $W_{40}$ \\ \hline\hline
$\mathcal{C}_{40,1}$ & $\left( 0100001110\right) $ & $\left(
0100110011\right) $ & $2^{2}\times 5$ & $0$ \\ \hline
$\mathcal{C}_{40,2}$ & $\left( 0000110011\right) $ & $\left(
0010111001\right) $ & $2^{3}\times 5$ & $0$ \\ \hline
$\mathcal{C}_{40,3}$ & $\left( 0101100111\right) $ & $\left(
1111001011\right) $ & $2^{3}\times 3\times 5$ & $0$ \\ \hline
$\mathcal{C}_{40,4}$ & $\left( 1001000100\right) $ & $\left(
0101101101\right) $ & $2^{14}\times 3\times 5$ & $10$ \\ \hline
$\mathcal{C}_{40,5}$ & $\left( 1000000010\right) $ & $\left(
1101011101\right) $ & $2^{16}\times 3^{3}\times 5^{2}$ & $10$ \\
\hline
\end{tabular}%
\end{center}
\end{table}
The binary self-dual codes of length 40 constructed from the
construction given in Corollary \ref{symmetric} are given in Table
\ref{tab:40-2}. Since $A$ is symmetric circulant, we only list the
necessary entries of the first row $r_A$.
\begin{table}[H]
\caption{$[40,20,8]$ codes by Corollary \protect\ref{symmetric}}
\label{tab:40-2}
\begin{center}
\begin{tabular}{||c|c|c|c|c||c||}
\hline
$\mathcal{D}_{40,i}$ & $r_{A}$ & $r_{B}$ & $r_{C}$ & $\left\vert Aut(%
\mathcal{D}_{40,i})\right\vert $ & $\beta $ in $W_{40}$ \\
\hline\hline $\mathcal{D}_{40,1}$ & $\left( 110111\right) $ &
$\left( 0010001100\right) $ & $\left( 1110100101\right) $ & $2^{3}$
& $0$ \\ \hline $\mathcal{D}_{40,2}$ & $\left( 011010\right) $ &
$\left( 0010111000\right) $ & $\left( 1110010010\right) $ & $2^{4}$
& $0$ \\ \hline $\mathcal{D}_{40,3}$ & $\left( 011010\right) $ &
$\left( 0010011100\right) $ & $\left( 1010000100\right) $ &
$2^{3}\times 3$ & $0$ \\ \hline $\mathcal{D}_{40,4}$ & $\left(
100111\right) $ & $\left( 1010110011\right) $ & $\left(
0000100110\right) $ & $2^{6}$ & $0$ \\ \hline $\mathcal{D}_{40,5}$ &
$\left( 111001\right) $ & $\left( 1001011010\right) $ & $\left(
0010011001\right) $ & $2^{7}$ & $0$ \\ \hline $\mathcal{D}_{40,6}$ &
$\left( 010111\right) $ & $\left( 1001111001\right) $ & $\left(
0001011111\right) $ & $2^{2}$ & $1$ \\ \hline $\mathcal{D}_{40,7}$ &
$\left( 011001\right) $ & $\left( 0111000011\right) $ & $\left(
0111100001\right) $ & $2^{3}$ & $1$ \\ \hline $\mathcal{D}_{40,8}$ &
$\left( 011010\right) $ & $\left( 0010000110\right) $ & $\left(
1100000000\right) $ & $2^{4}$ & $1$ \\ \hline $\mathcal{D}_{40,9}$ &
$\left( 110001\right) $ & $\left( 1011000011\right) $ & $\left(
0100110100\right) $ & $2^{3}$ & $2$ \\ \hline $\mathcal{D}_{40,10}$
& $\left( 000110\right) $ & $\left( 0100010011\right) $ & $\left(
1100001000\right) $ & $2^{6}$ & $2$ \\ \hline $\mathcal{D}_{40,11}$
& $\left( 111001\right) $ & $\left( 1100101001\right) $ & $\left(
0100111110\right) $ & $2^{9}\times 3^{2}$ & $2$ \\ \hline
$\mathcal{D}_{40,12}$ & $\left( 110100\right) $ & $\left(
1101111011\right) $ & $\left( 0011111001\right) $ & $2^{13}$ & $2$
\\ \hline $\mathcal{D}_{40,13}$ & $\left( 001010\right) $ & $\left(
1010000111\right) $ & $\left( 1111110011\right) $ & $2^{5}$ & $4$
\\ \hline $\mathcal{D}_{40,14}$ & $\left( 011000\right) $ & $\left(
0101010110\right) $ & $\left( 0000001100\right) $ & $2^{6}$ & $4$
\\ \hline $\mathcal{D}_{40,15}$ & $\left( 100111\right) $ & $\left(
0000110011\right) $ & $\left( 1010011001\right) $ & $2^{7}$ & $4$
\\ \hline $\mathcal{D}_{40,16}$ & $\left( 000010\right) $ & $\left(
0011111000\right) $ & $\left( 0100101010\right) $ & $2^{8}$ & $6$
\\ \hline $\mathcal{D}_{40,17}$ & $\left( 000010\right) $ & $\left(
1011101000\right) $ & $\left( 0100101010\right) $ & $2^{8}\times 3$
& $6$ \\ \hline $\mathcal{D}_{40,18}$ & $\left( 010100\right) $ &
$\left( 1000111111\right) $ & $\left( 0100100101\right) $ & $2^{15}$
& $10$ \\ \hline $\mathcal{D}_{40,19}$ & $\left( 110100\right) $ &
$\left( 0101010101\right) $ & $\left( 1001110010\right) $ &
$2^{14}\times 3\times 5$ & $10$ \\ \hline $\mathcal{D}_{40,20}$ &
$\left( 101010\right) $ & $\left( 1000100010\right) $
& $\left( 0101111101\right) $ & $2^{16}\times 3^{3}\times 5^{2}$ & $10$ \\
\hline
\end{tabular}%
\end{center}
\end{table}
We apply Corollary \ref{czero} to $\mathcal{C}_{40,4}$ from Table
\ref{tab:40-1}. In other words, we fix the circulant matrices $A$
and $B$ and search for reverse circulant matrices which satisfy the
given conditions. The results are tabulated in Table \ref{tab:40-3}.
The results have shown the method to be quite effective.
\begin{table}[H]
\caption{$[40,20,8]$ codes by Corollary \protect\ref{czero} for $\mathcal{C}%
_{40,4}$ } \label{tab:40-3}
\begin{center}
\begin{tabular}{||c|c|c||c||}
\hline
$\mathcal{E}_{40,i}$ & $r_{C}$ & $\left\vert Aut(\mathcal{E}%
_{40,i})\right\vert $ & $\beta $ in $W_{40}$ \\ \hline\hline
$\mathcal{E}_{40,1}$ & $\left( 1101011010\right) $ & $2^{3}$ & $0$
\\ \hline $\mathcal{E}_{40,2}$ & $\left( 1100011000\right) $ &
$2^{2}$ & $2$ \\ \hline $\mathcal{E}_{40,3}$ & $\left(
0111001110\right) $ & $2^{3}$ & $2$ \\ \hline $\mathcal{E}_{40,3}$ &
$\left( 0100001000\right) $ & $2^{8}$ & $2$ \\ \hline
$\mathcal{E}_{40,4}$ & $\left( 0011100111\right) $ & $2^{13}$ & $2$
\\ \hline
$\mathcal{E}_{40,5}$ & $\left( 0111101111\right) $ & $2^{11}$ & $4$ \\
\hline $\mathcal{E}_{40,6}$ & $\left( 1010010100\right) $ & $2^{8}$
& $6$ \\ \hline $\mathcal{E}_{40,7}$ & $\left( 1101011010\right) $ &
$2^{8}\times 3$ & $6$
\\ \hline
$\mathcal{E}_{40,8}$ & $\left( 1000110001\right) $ & $2^{15}$ & $10$ \\
\hline
$\mathcal{E}_{40,9}$ & $\left( 1111011110\right) $ & $2^{16}$ & $10$ \\
\hline
\end{tabular}%
\end{center}
\end{table}

\subsection{Comparison of the methods over $\mathbb{F}_{2}$ for length 64}

There are two possibilities for the weight enumerators of extremal
Type I self-dual codes of length $64$ (hence of parameters $\left[
64,32,12\right]$) (\cite{conway}):
\begin{eqnarray*}
W_{64,1} &=&1+\left( 1312+16\beta \right) y^{12}+\left(
22016-64\beta
\right) y^{14}+\cdots , \\
W_{64,2} &=&1+\left( 1312+16\beta \right) y^{12}+\left(
23040-64\beta \right) y^{14}+\cdots
\end{eqnarray*}%
where $\beta $ and $\gamma $ are parameters.

Self-dual four circulant $\left[ 64,32,12\right] _{2}$ type I codes
exist for weight enumerators $\beta =0,8,16,24,32,40,48,56,64$ and $72$ in $%
W_{64,2}$. We provide codes that we obtained from Corollary
\protect\ref{symmetric} in Table \ref{tab:64-1}. The results show
that the limited version of the generalized four circulant
construction gives some codes which do not have four circulant
representation (The ones with $\beta = 4,10,12,13,17,18, 20,28,34$.)

\begin{table}[H]
\caption{Type I extremal self-dual codes of length 64 by Corollary
\protect\ref{symmetric}} \label{tab:64-1}
\begin{center}
\begin{tabular}{||c|c|c|c|c||c||}
\hline $\mathcal{C}_{i}$ & $r_{A}$ & $r_{B}$ & $r_{C}$ & $\left\vert
Aut(\mathcal{ C}_{i})\right\vert $ & $\beta $ in $W_{64,2}$ \\
\hline\hline $\mathcal{C}_{1}$ & $\left( 101001111\right) $ &
$\left( 0001111001100011\right) $ & $\left( 1101001110011001\right)
$ & $2^{3}$ & $4$
\\ \hline
$\mathcal{C}_{2}$ & $\left( 001101011\right) $ & $\left(
0101001000001001\right) $ & $\left( 0101011110111101\right) $ &
$2^{4}$ & $8$
\\ \hline
$\mathcal{C}_{3}$ & $\left( 111101110\right) $ & $\left(
0011110000111101\right) $ & $\left( 0101011110111101\right) $ &
$2^{5}$ & $8$
\\ \hline
$\mathcal{C}_{4}$ & $\left( 111101011\right) $ & $\left(
0000100001000001\right) $ & $\left( 0100000011001101\right) $ & $2^{3}$ & $%
10 $ \\ \hline $\mathcal{C}_{5}$ & $\left( 101101111\right) $ &
$\left(
1000101000101000\right) $ & $\left( 0010111110100010\right) $ & $2^{4}$ & $%
12 $ \\ \hline $\mathcal{C}_{6}$ & $\left( 100110100\right) $ &
$\left(
0001001001111111\right) $ & $\left( 0101011110111101\right) $ & $2^{3}$ & $%
13 $ \\ \hline $\mathcal{C}_{7}$ & $\left( 100011101\right) $ &
$\left(
1010110001101011\right) $ & $\left( 1100111001100100\right) $ & $2^{4}$ & $%
16 $ \\ \hline $\mathcal{C}_{8}$ & $\left( 110001001\right) $ &
$\left(
0010111110100010\right) $ & $\left( 1100011011111000\right) $ & $2^{3}$ & $%
17 $ \\ \hline $\mathcal{C}_{9}$ & $\left( 000011110\right) $ &
$\left(
0000011101110111\right) $ & $\left( 0100000011001101\right) $ & $2^{3}$ & $%
18 $ \\ \hline $\mathcal{C}_{10}$ & $\left( 101101100\right) $ &
$\left(
1111001000100111\right) $ & $\left( 1100101101001011\right) $ & $2^{4}$ & $%
20 $ \\ \hline $\mathcal{C}_{11}$ & $\left( 000011100\right) $ &
$\left(
0010001101001111\right) $ & $\left( 0101011110111101\right) $ & $2^{3}$ & $%
24 $ \\ \hline $\mathcal{C}_{12}$ & $\left( 010111001\right) $ &
$\left(
1010000110110000\right) $ & $\left( 0100010001011111\right) $ & $2^{4}$ & $%
24 $ \\ \hline $\mathcal{C}_{13}$ & $\left( 010010011\right) $ &
$\left(
0000000100000101\right) $ & $\left( 0101011110111101\right) $ & $2^{4}$ & $%
28 $ \\ \hline $\mathcal{C}_{14}$ & $\left( 111011011\right) $ &
$\left(
0100000101111111\right) $ & $\left( 0001010001000001\right) $ & $2^{4}$ & $%
32 $ \\ \hline $\mathcal{C}_{15}$ & $\left( 011111000\right) $ &
$\left(
1000101000100010\right) $ & $\left( 0001001101000011\right) $ & $2^{3}$ & $%
\mathbf{34}$ \\ \hline
\end{tabular}%
\end{center}
\end{table}

\begin{remark}
The first code with weight enumerator $\beta =34$ in $W_{64,2}$ has
been recently constructed in \cite{anev}. Here we give an
alternative construction.
\end{remark}

\subsection{Comparison of the methods over $\mathbb{F}_{2}+u\mathbb{F}_{2}$}

\noindent In this section we compare the methods; four circulant
construction and generalized four circulant construction over $%
\mathbb{F}_{2}+u\mathbb{F}_{2}$ for length $32$. A complete
classification of four circulant codes of length $32$ over
$\mathbb{F}_{2}+u\mathbb{F}_{2}$ is given by Karadeniz et al. in
\cite{karadeniz4}. Four circulant type I codes of length 32 have
binary images corresponding to weight enumerators with $\beta =
0,16,32,48$ and $80$ in $W_{64,2}$. In Table \ref{tab:R1} we provide
generalized four circulant codes. It is observed that the latter
method is more efficient as it produces many more codes of length
$64$ with parameters that could not be obtained from the ordinary
four circulant construction.
\begin{table}[H]
\caption{$[64,32,12]$ codes via Theorem \protect\ref{general} over $%
\mathbb{F}_{2}+u\mathbb{F}_{2}$ ($W_{64,2}$)} \label{tab:R1}
\begin{center}
\scalebox{0.9}{
\begin{tabular}{||c|c|c|c|c||c||}
\hline $\mathcal{F}_{i}$ & $r_{A}$ & $r_{B}$ & $r_{C}$ & $\left\vert
Aut(\mathcal{F}_i)\right\vert $ & $\beta$ \\ \hline\hline ${1}$ &
$(0,u,0,0,1,u,3,0) $ & $(u,u,u,0,0,1,1,3) $ & $(0,u,3,0,0,u,3,0) $ &
$2^{4}$ & $0$\\ \hline $2$ & $(u,u,0,u,1,u,1,u) $ &
$(u,u,u,0,0,1,1,3)$ & $(0,0,3,0,0,0,3,0) $ & $2^{5}$ & $0$\\ \hline
${3}$ & $(u,u,u,u,1,u,3,u) $ & $(u,u,u,0,0,1,1,3) $ &
$(u,u,1,u,u,u,1,u) $ & $2^{6}$ & $0$\\ \hline $4$ &
$(u,0,u,0,0,1,u,3) $ & $(u,u,u,u,0,1,1,1)$ & $(1,u,3,0,3,u,1,0) $ &
$2^{3}$ & $4$\\ \hline ${5}$ & $(u,u,u,u,1,1,3,1) $ &
$(u,u,0,1,0,0,1,3) $ & $(u,u,0,u,u,u,0,u) $ & $2^{4}$ & $4$\\ \hline
$6$ & $(0,u,0,u,1,u,3,u) $ & $(u,u,u,0,0,1,1,3)$ &
$(u,0,3,0,u,0,3,0) $ & $2^{5}$ & $4$\\ \hline ${7}$ &
$(0,u,0,u,1,1,0,1) $ & $(u,u,0,u,1,1,1,3) $ & $(u,0,u,0,u,3,u,3) $ &
$2^{3}$ & $8$\\ \hline $8$ & $(u,0,0,u,1,u,1,0) $ &
$(u,u,u,u,0,1,1,1)$ & $(u,0,1,0,u,0,1,0) $ & $2^{4}$ & $8$\\ \hline
${9}$ & $(u,u,u,u,1,1,u,1) $ & $(u,u,u,u,u,1,0,1) $ &
$(u,0,u,u,1,3,1,1) $ & $2^{5}$ & $8$\\ \hline $10$ &
$(u,u,0,0,1,1,0,3) $ & $(u,u,0,u,1,1,1,3)$ & $(u,u,0,u,u,1,0,1) $ &
$2^{3}$ & $12$\\ \hline ${11}$ & $(u,u,u,u,0,1,0,3) $ &
$(u,u,u,u,0,1,1,1) $ & $(1,u,3,u,1,u,3,u) $ & $2^{4}$ & $12$\\
\hline $12$ & $(0,u,0,u,0,1,u,3) $ & $(u,u,u,0,0,1,1,3)$ &
$(1,0,3,0,1,0,3,0) $ & $2^{5}$ & $12$\\ \hline ${13}$ &
$(u,0,0,0,u,1,u,3) $ & $(u,u,u,0,0,1,1,3) $ & $(1,u,3,u,1,u,3,u) $ &
$2^{6}$ & $12$\\ \hline $14$ & $(u,0,u,0,1,1,u,1) $ &
$(u,u,u,u,u,1,0,1)$ & $(0,0,0,u,1,3,1,1) $ & $2^{4}$ & $16$\\ \hline
${15}$ & $(u,u,u,u,0,1,0,3) $ & $(u,u,u,u,0,1,1,1) $ &
$(3,0,3,0,3,0,3,0) $ & $2^{5}$ & $16$\\ \hline $16$ &
$(u,u,0,u,u,1,u,3) $ & $(u,u,u,0,0,1,1,3)$ & $(3,0,3,0,3,0,3,0) $ &
$2^{6}$ & $16$\\ \hline ${17}$ & $(0,0,u,0,0,1,0,3) $ &
$(u,u,u,0,0,1,1,3) $ & $(3,0,3,0,3,0,3,0) $ & $2^{7}$ & $16$\\
\hline $18$ & $(u,u,u,u,1,1,u,1) $ & $(u,u,0,0,u,1,u,3)$ &
$(u,u,u,0,3,3,3,1) $ & $2^{3}$ & $20$\\ \hline ${19}$ &
$(u,0,0,u,1,u,1,0) $ & $(u,u,u,u,0,1,1,1) $ & $(0,0,1,0,0,0,1,0) $ &
$2^{4}$ & $20$\\ \hline $20$ & $(u,u,u,u,1,1,0,1) $ &
$(u,u,0,u,1,3,3,1)$ & $(u,u,u,u,u,3,0,3) $ & $2^{3}$ & $24$\\ \hline
${21}$ & $(u,0,0,u,0,1,u,1) $ & $(u,u,u,u,0,1,1,1) $ &
$(3,u,3,0,3,u,3,0) $ & $2^{4}$ & $24$\\ \hline $22$ &
$(u,u,0,u,0,1,0,1) $ & $(u,u,u,0,0,u,0,1)$ & $(1,0,1,0,1,3,1,3) $ &
$2^{5}$ & $24$\\ \hline ${23}$ & $(u,0,0,0,u,1,u,3) $ &
$(u,u,u,0,0,1,1,3) $ & $(1,u,3,0,1,u,3,0) $ & $2^{4}$ & $28$\\
\hline $24$ & $(u,u,u,u,u,1,0,3) $ & $(u,u,u,0,0,1,1,3)$ &
$(1,u,1,0,1,u,1,0) $ & $2^{5}$ & $32$\\ \hline ${25}$ &
$(u,u,0,0,1,0,1,u) $ & $(u,u,u,u,0,1,1,1) $ & $(u,0,3,0,u,0,3,0) $ &
$2^{4}$ & $36$\\ \hline $26$ & $(u,u,u,0,1,u,3,0) $ &
$(u,u,u,0,0,1,1,3)$ & $(u,u,3,u,u,u,3,u) $ & $2^{5}$ & $36$\\ \hline
${27}$ & $(0,u,u,0,0,1,0,1) $ & $(u,u,u,0,0,1,1,3) $ &
$(1,0,3,0,1,0,3,0) $ & $2^{4}$ & $44$\\ \hline $28$ &
$(0,u,u,0,0,1,0,1) $ & $(u,u,u,0,0,1,1,3)$ & $(1,u,1,u,1,u,1,u) $ &
$2^{5}$ & $48$\\ \hline ${29}$ & $(u,0,u,0,u,1,0,3) $ &
$(u,u,u,0,0,1,1,3) $ & $(3,u,3,u,3,u,3,u) $ & $2^{7}$ & $80$\\
\hline
\end{tabular}}
\end{center}
\end{table}

\subsection{Computational results over $\mathbb{F}_{4}+u\mathbb{F}_{4}$}
Four circulant codes of length 16 over
$\mathbb{F}_{4}+u\mathbb{F}_{4}$ had been studied in \cite{kaya}.
The binary images of these codes are Type I extremal self-dual
binary codes with weight enumerators
$\beta=0,4,8,12,24,28,32,36,40,48$ and $52$ in $W_{64,2}$. We apply
Corollary \ref{symmetric} and observe that it provides many new
parameters that could not be constructed from the four circulant
construction. The results are tabulated in Table
\ref{tab:tableF4cor1}.
\begin{center}
\begin{table}[H]
\caption{Self-dual codes via Corollary \protect\ref%
{symmetric} over $\mathbb{F}_{4}+u\mathbb{F}_{4}$ of length 16 whose
binary images are self-dual codes of length 64}
\label{tab:tableF4cor1}
\begin{center}
\begin{tabular}{||c|c|c|c|c||c||}
\hline
$\mathcal{E}_{i}$ & $r_{A}$ & $r_{B}$ & $r_{C}$ & $\left\vert Aut(\mathcal{E}%
_{i})\right\vert $ & $\beta $ in $W_{64,2}$ \\ \hline\hline
$\mathcal{E}_{1}$ & $\left( D,F,5,F\right) $ & $\left( E,C,0,1\right) $ & $%
\left( 7,B,4,A\right) $ & $2^{3}$ & $\mathbf{1}$ \\ \hline
$\mathcal{E}_{2}$ & $\left( 5,B,D,B\right) $ & $\left( A,E,B,D\right) $ & $%
\left( 7,F,1,8\right) $ & $2^{3}$ & $\mathbf{5}$ \\ \hline
$\mathcal{E}_{3}$ & $\left( B,9,D,9\right) $ & $\left( 2,E,9,7\right) $ & $%
\left( D,7,1,2\right) $ & $2^{3}$ & $9$ \\ \hline
$\mathcal{E}_{4}$ & $\left( D,F,F,F\right) $ & $\left( E,E,9,8\right) $ & $%
\left( A,6,9,7\right) $ & $2^{3}$ & $\mathbf{13}$ \\ \hline
$\mathcal{E}_{5}$ & $\left( 9,7,7,7\right) $ & $\left( 0,F,C,0\right) $ & $%
\left( 4,1,F,A\right) $ & $2^{3}$ & $\mathbf{17}$ \\ \hline
$\mathcal{E}_{6}$ & $\left( F,9,7,9\right) $ & $\left( 2,4,3,F\right) $ & $%
\left( F,5,B,8\right) $ & $2^{3}$ & $\mathbf{21}$ \\ \hline
$\mathcal{E}_{7}$ & $\left( D,0,F,0\right) $ & $\left( 9,E,C,A\right) $ & $%
\left( D,B,4,2\right) $ & $2^{3}$ & $\mathbf{29}$ \\ \hline
$\mathcal{E}_{8}$ & $\left( 5,8,6,8\right) $ & $\left( F,5,E,8\right) $ & $%
\left( 0,8,9,7\right) $ & $2^{5}$ & $40$ \\ \hline
$\mathcal{E}_{9}$ & $\left( B,4,4,4\right) $ & $\left( 7,E,8,D\right) $ & $%
\left( 0,6,4,2\right) $ & $2^{5}$ & $\mathbf{52}$ \\ \hline
\end{tabular}%
\end{center}
\end{table}
\end{center}

\begin{remark}
The codes with weight enumerators for 1, 5, 13, 17, 21, 29 and 52
were first constructed in \cite{karadeniz64} as $R_{3}$ lifts of the
extended binary Hamming code. These are reconstructed by a circulant
construction in Table \ref{tab:tableF4cor1}.
\end{remark}

\section{New extremal binary self-dual codes of length 68}

The possible weight enumerator of an extremal binary self-dual code
of length 68 (of parameters $\left[ 68,34,12\right]$) is in one of
the following forms by \cite{buyuklieva,harada}:
\begin{eqnarray*}
W_{68,1} &=&1+\left( 442+4\beta \right) y^{12}+\left( 10864-8\beta
\right)
y^{14}+\cdots ,104\leq \beta \leq 1358, \\
W_{68,2} &=&1+\left( 442+4\beta \right) y^{12}+\left( 14960-8\beta
-256\gamma \right) y^{14}+\cdots
\end{eqnarray*}%
where $0\leq \gamma \leq 9$. Recently, Yankov et al. constructed the
first examples of codes with a weight enumerator for $\gamma =7$ in
$W_{68,2}$. Together with these, the existence of codes in
$W_{68,2}$ is known for many values. In order to save space we only
give the lists for $\gamma =5$ and  $\gamma =6$, which are updated
in this work;
\begin{equation*}
\gamma =5\text{ with }\beta \in
\left\{\text{113,116,$...$,181}\right\}
\gamma =6\text{ with }\beta \in \left\{ 2m|m=\text{69, 77, 78, 79, 81, 88}%
\right\}
\end{equation*}
We construct 36 new codes with the rare parameter $\gamma=6$ and 7
codes with $\gamma=5$ in $W_{68,2}$.

We first construct two new codes of length $68$ by applying the
extension method described in Theorem \ref{extension} over
$\F_2+u\F_2$ to $\mathcal{F}_2$ from Table \ref{tab:R1}.
\begin{table}[H]
\caption{New codes of length $68$ as extensions of codes in Table
\protect \ref{tab:R1} by Theorem \ref{extension}}\label{F4UF4C4}
\par
\begin{center}
\scalebox{1}{
   \begin{tabular}{cccccc} \hline
$\mathcal{D}_{68,i}$&  $\mathcal{F}_i$ & $c$  & $X$ & $\gamma$  &
$\beta$  \\ \hline $\mathcal{D}_{68,1}$ &  $2$ &$1$ &
$(31u011u30uu113u3333u11u010301101)$ & $\textbf{5}$ &
$\textbf{101}$\\ \hline $\mathcal{D}_{68,2}$ &  $2$ &$1+u$ & $(1 3 0
0 3 1 u 3 0 0 0 1 3 1 0 1 3 1 3 u 3 1 u u 3 0 1 u 3 1 0 3)$ &
$\textbf{5}$ & $\textbf{105}$\\ \hline
    \end{tabular}}
\end{center}
\end{table}

Now, applying the neighboring construction to the codes obtained in
Table \ref{F4UF4C4}, we get the following new codes of length 68:

\begin{table}[H]
\caption{New codes of length $68$ as neighbors of codes in Table \protect\ref%
{F4UF4C4}}\label{C4new}
\par
\begin{center}
\scalebox{1}{
   \begin{tabular}{ccccc} \hline
$\mathcal{N}_{68,i}$&  $\mathcal{D}_{68,i}$   & $x$ & $\gamma$  &
$\beta$  \\ \hline $\mathcal{N}_{68,1}$ &  $2$ &  $(1 0 1 0 1 1 1 0
0 1 0 1 1 1 0 0 0 1 0 1 0 0 0 0 0 0 1 0 0 0 0 0 0 0)$ & $\textbf{5}$
& $\textbf{109}$\\ \hline $\mathcal{N}_{68,2}$ &  $2$ &  $(0 0 0 0 1
1 0 0 1 1 0 1 1 0 1 0 1 1 1 1 0 1 1 1 0 1 0 0 0 1 1 1 0 0)$ &
$\textbf{5}$ & $\textbf{111}$\\ \hline $\mathcal{N}_{68,3}$ &  $2$ &
$(0 0 0 0 1 1 1 1 1 0 0 1 1 1 1 1 1 1 1 0 1 1 0 1 0 0 0 1 1 0 0 0 0
0)$ & $\textbf{5}$ & $\textbf{112}$\\ \hline $\mathcal{N}_{68,4}$ &
$2$  & $(1 1 0 1 1 1 0 0 0 0 0 0 1 0 0 1 0 1 1 1 0 0 1 0 1 1 0 0 0 1
0 1 0 1)$ & $\textbf{5}$ & $\textbf{114}$\\ \hline
$\mathcal{N}_{68,5}$ &  $1$  & $(1 1 1 0 1 1 0 1 0 0 0 0 0 0 0 1 0 0
1 1 0 0 0 0 0 1 1 1 0 0 1 0 1 0)$ & $\textbf{5}$ & $\textbf{115}$\\
\hline $\mathcal{N}_{68,6}$ &  $2$  & $(1 0 0 1 0 1 0 0 0 1 0 1 0 1
0 1 1 1 0 0 1 0 1 1 1 1 1 0 1 1 1 0 0 0)$ & $\textbf{6}$ &
$\textbf{133}$\\ \hline
    \end{tabular}}
\end{center}
\end{table}

\begin{example}
\label{gamma6}Let $\mathcal{C}_{68}$ be the code obtained by extending $%
\varphi \left( \mathcal{E}_{6}\right) $ over
$\mathbb{F}_{2}+u\mathbb{F}_{2}$ where $X=\left(
3,0,1,1,u,0,u,3,0,1,1,u,1,0,0,u,3,0,0,u,u,u,1,3,3,0,1,3,1,u,0,3\right)$,
then the binary image of $\mathcal{C}_{68}$ is an extremal self-dual $\left[ 68,34,12%
\right]$ code with weight enumerator for $\gamma =6$ and $\beta
=157$ in $W_{68,2}$. As listed above only $6$ codes with $\gamma =6$
were known before. So this is the first example of a self-dual code
with the corresponding weight enumerator.
\end{example}

Without loss of generality, we consider the standard form of the
generator
matrix of $\varphi \left( \mathcal{C}_{68}\right) $. Let $x\in {\mathbb{F}}%
_{2}^{68}-\varphi \left( \mathcal{C}_{68}\right) $ then
$D=\left\langle
\left\langle x\right\rangle ^{\bot }\cap \varphi \left( \mathcal{C}%
_{68}\right) ,x\right\rangle $ is a neighbor of $\varphi \left( \mathcal{C}%
_{68}\right) $. The first $34$ entries of $x$ are set to be $0$, the
rest of the vectors are listed in Table \ref{tab:68}. As neighbors
of $\varphi \left( \mathcal{C}_{68}\right) $\ we obtain $34$ new
codes with weight
enumerators for $\gamma =6$ in $W_{68,2},$ which are listed in Table \ref%
{tab:68}. All the codes have an automorphism group of order 2.

\begin{table}[H]
\caption{New extremal binary self-dual codes of length $68$ with $\protect%
\gamma =6$ as neighbors of $\mathcal{C}_{68}$}
\label{tab:68}%
\scalebox{0.8}{
\begin{tabular}{||c|c||c||c|c||c||}
\hline $\mathcal{C}_{68,i}$ & $X$ & $\beta $ &$\mathcal{C}_{68,i}$ &
$X$ & $\beta $\\ \hline\hline $\mathcal{C}_{68,1}$ & $\left(
1111111100111100001100001000000111\right) $ & 137 &
$\mathcal{C}_{68,2}$ & $\left(
0101001001001111111011100010111011\right) $ & 139 \\ \hline
$\mathcal{C}_{68,3}$ & $\left(
1000001100000110110110000111100010\right) $ & 140 &
$\mathcal{C}_{68,4}$ & $\left(
0010011101110110011001110110110110\right) $ & 141 \\ \hline
$\mathcal{C}_{68,5}$ & $\left(
1111111111000011111101100010011001\right) $ & 142 &
$\mathcal{C}_{68,6}$ & $\left(
1001000001111111111010010000011110\right) $ & 143 \\ \hline
$\mathcal{C}_{68,7}$ & $\left(
1100100010000111001100111111110001\right) $ & 144 &
$\mathcal{C}_{68,8}$ & $\left(
0000110001110110011011011010000110\right) $ & 145 \\ \hline
$\mathcal{C}_{68,9}$ & $\left(
1000010100001010110101110111110101\right) $ & 146 &
$\mathcal{C}_{68,10}$ & $\left(
1100110100000010010000110010011110\right) $ & 147 \\ \hline
$\mathcal{C}_{68,11}$ & $\left(
1110101000011110100101111111101011\right) $ & 148 &
$\mathcal{C}_{68,12}$ & $\left(
0110011001001101000111010101011000\right) $ & 149 \\ \hline
$\mathcal{C}_{68,13}$ & $\left(
1111111100101101000000001011111000\right) $ & 150 &
$\mathcal{C}_{68,14}$ & $\left(
0000100001100010111010011111111000\right) $ & 151 \\ \hline
$\mathcal{C}_{68,15}$ & $\left(
1110000010100000001110110110000101\right) $ & 152 &
$\mathcal{C}_{68,16}$ & $\left(
1010100100110011111101001101001001\right) $ & 153 \\ \hline
$\mathcal{C}_{68,17}$ & $\left(
1111010010000100100000101000011101\right) $ & 155 &
$\mathcal{C}_{68,18}$ & $\left(
1000001011110111100101100000001000\right) $ & 159 \\ \hline
$\mathcal{C}_{68,19}$ & $\left(
0001010001010101010010010001100010\right) $ & 160 &
$\mathcal{C}_{68,20}$ & $\left(
1100000100011110101111110001010101\right) $ & 161 \\ \hline
$\mathcal{C}_{68,21}$ & $\left(
0101110011110010110000111111010011\right) $ & 163 &
$\mathcal{C}_{68,22}$ & $\left(
1000000111111000000010111100010001\right) $ & 164 \\ \hline
$\mathcal{C}_{68,23}$ & $\left(
0100000001010000001001110110010110\right) $ & 165 &
$\mathcal{C}_{68,24}$ & $\left(
0111001010010100000010010010101000\right) $ & 166 \\ \hline
$\mathcal{C}_{68,25}$ & $\left(
1111010011000111000101101001011100\right) $ & 167 &
$\mathcal{C}_{68,26}$ & $\left(
0010110010110100000010001111000000\right) $ & 168 \\ \hline
$\mathcal{C}_{68,27}$ & $\left(
0000010011010110001010010000101001\right) $ & 169 &
$\mathcal{C}_{68,28}$ & $\left(
1110101000110000011111010101010101\right) $ & 170 \\ \hline
$\mathcal{C}_{68,29}$ & $\left(
1110100001100111100100000010010010\right) $ & 171 &
$\mathcal{C}_{68,30}$ & $\left(
1000001101101110010001101010111101\right) $ & 172 \\ \hline
$\mathcal{C}_{68,31}$ & $\left(
1100100001110011101001010001100000\right) $ & 173 &
$\mathcal{C}_{68,32}$ & $\left(
0100011001000011000100010100101100\right) $ & 174 \\ \hline
$\mathcal{C}_{68,33}$ & $\left(
0110000001110110000111101000101011\right) $ & 177 &
$\mathcal{C}_{68,34}$& $\left( 1 0 1 1 1 1 1 0 0 0 1 0 0 0 0 0 0 0 1
0 1 1 0 1 0 0 0 0 1 0 1 0 1 0\right) $ & 184\\ \hline
\end{tabular}}
\end{table}

\section{Conclusion}

In this paper, we generalize the well known four circulant
construction for constructing self-dual codes. We compare both
methods to highlight the significance of the generalized
construction. Additionally, we construct many new codes of length
$68$. For codes of length $68$, we constructed the following codes
with new weight enumerators in $W_{68,2}$:

\begin{equation*}
\begin{split}
\gamma =5,& \quad \beta =\{101,105,109,111,112,114,115\}. \\
\gamma =6,& \quad \beta
=\{133,137,139,140,141,142,143,144,145,146,147,148,149,150,151,152, \\
& \qquad \quad
153,155,157,159,160,161,163,164,165,166,167,168,169,170,171,
\\
& \qquad \quad 172,173,174,177,184\}.
\end{split}%
\end{equation*}

The binary generator matrices of the new codes we have constructed
are available online at \cite{web}.

The results we have obtained have demonstrated the effectiveness of
the new construction and the difference from the ordinary
four-circulant construction. A possible direction for future
research could be applying these constructions for different rings
and lengths.

\end{document}